
\documentclass[preprint,12pt]{elsarticle}
\parskip 1em
\usepackage{graphicx}
\usepackage[utf8]{inputenc}
\usepackage{palatino}
\usepackage{textcomp}
\usepackage{gensymb}
\usepackage{natbib}
\usepackage{amssymb,amsmath}
\usepackage{textcomp}
\usepackage{url}
\usepackage{amsmath}
\usepackage{amssymb}
\usepackage{amsthm}
\usepackage{setspace}
\usepackage{graphicx}
\usepackage{tikz}
\usetikzlibrary{chains,matrix,arrows,decorations.pathmorphing,backgrounds,positioning,fit,petri}
\usetikzlibrary{decorations.pathreplacing}
\usepackage{mathtools}
\usepackage{setspace}
\usepackage{natbib}
\usepackage{ mathrsfs }
\usepackage[retainorgcmds]{IEEEtrantools}
\usepackage{amssymb}
\usepackage{mathtools}
\usepackage{mathdots}

\newtheorem*{itheorem}{Theorem A}
\newtheorem*{jtheorem}{Theorem B}
\newtheorem{theorem}{Theorem}[section]

\newtheorem{lemma}[theorem]{Lemma}
\newtheorem{proposition}[theorem]{Proposition}
\newtheorem{definition}[theorem]{Definition}
\theoremstyle{definition} 
\theoremstyle{definition}
\newtheorem{conjecture}{Conjecture}[theorem]
\theoremstyle{definition}

\newtheorem{remark}[theorem]{Remark}

\theoremstyle{remark}

\usepackage{hyperref}
\hypersetup{colorlinks,%
citecolor=red,%
filecolor=black,%
linkcolor=cyan,%
urlcolor=black}
\begin{document}

\begin{frontmatter}


\title{Graded multiplicities in the exterior algebra of the little adjoint module}
\author{Ibukun Ademehin}
\address{School of Mathematics, University of Manchester, Manchester, M13 9PL, United Kingdom}
\ead{ibukun.ademehin@manchester.ac.uk}
\begin{abstract}
As a first application of the double affine Hecke algebra with unequal parameters on Weyl orbits to representation theory of semisimple Lie algebras, we find the graded multiplicities of the trivial module and of the little adjoint module in the exterior algebra of the little adjoint module of a simple Lie algebra $\,\mathfrak{g}\,$ with a non-simply laced Dynkin diagram. We prove that in type $\,B, C\,$ or $\,F\,$ these multiplicities can be expressed in terms of special exponents of positive long roots in the dual root system of $\,\mathfrak{g}.\,$
\end{abstract}

\begin{keyword}
Exterior algebra \sep Invariants \sep Graded multiplicity


\end{keyword}

\end{frontmatter}


\section{Introduction}
Let $\,\mathfrak{g}$ be a simple Lie algebra of rank $\,r$ over $\, \mathbb{C}.\,$ Let $\,V_{\lambda}\,$ be the finite dimensional irreducible $\,\mathfrak{g}$-module with highest weight $\,\lambda.\,$ The graded multiplicity of $\,V_{\mu}\,$ in the exterior algebra $\,\bigwedge V_{\lambda}\,$ is the polynomial
\begin{equation*}
GM_{[\bigwedge V_{\lambda}:V_{\mu}]}(q)= \sum_{i\geq 0} d_i q^i,
\end{equation*}
where $\,d_i\,$ is the multiplicity of $\,V_{\mu}\,$ in $\,\bigwedge^i V_{\lambda}.\,$ When the exterior algebra is clearly identified we write $\,GM_{\mu}(q).\,$
\par Decomposing the exterior algebra of the adjoint module $\,\bigwedge \mathfrak{g}\,$ has been given some consideration since the last century following the more classical problem of decomposing the symmetric algebra $\,S\mathfrak{g}.\,$ Known results in $\,\bigwedge \mathfrak{g}\,$ include the following. Kostant \text{\cite{1Kos}} used the Hopf-Koszul-Samelson theorem, which asserts that the skew invariants $\, (\bigwedge \mathfrak{g})^{\mathfrak{g}}\,$ form an exterior algebra over the graded subspace of primitive invariants $\, \langle P_1, \cdots, P_r \rangle, \,$ to deduce the graded multiplicity of $\, (\bigwedge \mathfrak{g})^{\mathfrak{g}}\,$  expressed in terms of the exponents of $\, \mathfrak{g},\,$ which is the Poincar\'e polynomial of the De Rham cohomology of the Lie group $\,G$ associated with $\,\mathfrak{g}.\,$ Kostant also obtained in \text{\cite{1Kos}} the ungraded multiplicity of $\,\mathfrak{g}\,$ in $\,\bigwedge \mathfrak{g}.\,$
In \text{\text{\cite{3Baz}}} Bazlov proved Joseph's conjecture on the graded multiplicity of $\,\mathfrak{g} \,$ in $\, \bigwedge \mathfrak{g}.\,$ The formula for this graded multplicity suggests that the isotypic component of $\,\mathfrak{g}\,$ is a free module over the subring $\,\bigwedge \langle P_1, \cdots, P_{r-1} \rangle\,$ of the skew invariants. This indeed is the case, as was proved by De Concini, Papi and Procesi \text{\cite{4DPP}} who used the Chevalley transgression theorem to explicitly obtain basis vectors for the free module.
\par In \text{\cite{5Pan}} Panyushev obtained a classification of orthogonal irreducible $\,\mathfrak{g}$-modules $V$ whose skew invariants is again an exterior algebra (this classification includes the little adjoint module when the root system of $\,\mathfrak{g}\,$ is of type $\,B, C\,$ or $\,F\,$), and also proved the $\,\mathfrak{g}$-module isomorphism of $\, \bigwedge V\,$ to the reduced $\,Spin\,\text{ of } V.\,$ 
\par Besides these results not much is known about the decomposition of $\,\bigwedge V\,$ when $V$ is not the adjoint module. This motivates our study of the ring structure of the exterior algebra of the little adjoint module.
\subsection{The little adjoint module} Let $\,\mathfrak{g}\,$ be of type $\,B, C, F\,$ or $\,G.\,$ Let $\,V_{\theta_s}\,$ be the little adjoint module with highest weight the highest short root $\,\theta_s$ of $\,\mathfrak{g}\,$  and character $\,\chi_{\theta_s} =r_s+\sum_{\alpha \in R^s}e^{\alpha},\,$  where $\,r_s$ is the number of short simple roots of $\,R,\,$ the root system of $\,\mathfrak{g}\,$ and $\,R_s\,$ is the set of short roots in $R.$ Recall that in type $\,B_r,\,V_{\theta_s}\,$ is the standard $(2r+1)$-dimensional module of $\,so_{2r+1}(\mathbb{C}),\,\text{\cite[Chap. 19.4]{6FHa}}.\,$ To describe $\,V_{\theta_s}\,$ in type $\,C_r,\,$ let  $\,\{\omega_i\}_{i=1}^r\,$ be the set of fundamental weights of $\,\mathfrak{g}=sp_{2r}(\mathbb{C})\,$ and let $\,V_{\omega_{_1}}=\mathbb{C}^{2r}\,$ be the standard module of $\,sp_{2r}(\mathbb{C}).\,$ Let $\,\phi:\bigwedge^2 \mathbb{C}^{2r} \to \mathbb{C}\,$ be the contraction defined as $\,\phi(v\wedge w)= Q(v,w),\,$ where $\,Q\,$ is the skew-symmetric form associated with $\,sp_{2r}(\mathbb{C}).\,$ Then $\,V_{\theta_s}=V_{\omega_2}\,$ is the kernel of $\,\phi\,$ contained in $\,\bigwedge^2 \mathbb{C}^{2r}=V_{\theta_s} \oplus V_0,\, \text{\cite[Chap. 17.2]{6FHa}}.\,$ In type $\,F_4,\,$ $\,V_{\theta_s}\,$ is the standard module of the automorphism group of the $26$-dimensional $\,3\times 3\,$ Hermitian traceless Jordan octonion matrices, \text{\cite[Sect. 6.2.3]{7*}}. In type $\,G_2,\, V_{\theta_s}\,$ is the standard module of the Lie group $\,G_2,\,$ which is the automorphism group that preserves an alternating cubic form on the $7$-dimensional imaginary octonions, \text{\cite[Sect. 4]{8JCB}}.
\subsection{Main result} In the present paper, we prove the formulae for the graded multiplicities of the trivial module and of the little adjoint module in $\,\bigwedge V_{\theta_s},\,$ as Theorem~\text{\ref{T1}} and Theorem~\text{\ref{T2}}. We state these below.
\par
Let $\,R\,$ be of type $\,B, C\,$ or $\,F.\,$ Let $\,\{h_i\}_{i=1}^{r_s}\,$ be the set of special exponents of $\,(R_s^+)\,\check{}\,$ which form the partition dual to the partition arising from the positive long coroots in $\,R\,\check{}\,$ with respect to the special heights of these coroots (note that the coroots of short roots of $\,R$ have the long length in $\,(R_s^+)\,\check{},\,$ \text{\cite[Sect. 1.1]{9BFM}}). Then
\begin{itheorem}[Theorem~\text{\ref{T1}}]
The graded multiplicity of the trivial module $\,V_0\,$ in $\, \bigwedge V_{\theta_s}\,$ is given by 
\begin{equation}\label{z1}
 GM_0(q)=\prod_{i=1}^{r_s}(1+q^{2h_i+1}).
    \end{equation}
    \end{itheorem}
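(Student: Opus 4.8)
The plan is to realise $GM_0(q)$ as a constant term and to evaluate it by means of the polynomial representation of the double affine Hecke algebra $\mathbb{H}$ of $R$, whose two parameters — one on the long $W$-orbit of roots and one on the short — are exactly what is needed to treat the short and the long roots on a different footing. As a preliminary step I would record that $V_{\theta_s}$ is quasi-minuscule, so that its non-zero weights are the short roots, each with multiplicity one, while $0$ occurs with multiplicity $r_s$. Hence the $q$-graded character of the exterior algebra is the explicit product
\begin{equation*}
\mathrm{gch}\Bigl(\textstyle\bigwedge V_{\theta_s}\Bigr)=(1+q)^{r_s}\prod_{\alpha\in R_s}\bigl(1+q\,e^{\alpha}\bigr),
\end{equation*}
and $GM_0(q)=\bigl\langle\,\mathrm{gch}(\bigwedge V_{\theta_s}),\,1\,\bigr\rangle$ is, up to the factor $1/|W|$, the constant term of this Laurent polynomial against the density $\prod_{\alpha\in R^{+}}(1-e^{\alpha})(1-e^{-\alpha})$.

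The heart of the proof is the evaluation of $\bigl\langle\prod_{\alpha\in R_s}(1+q\,e^{\alpha}),\,1\bigr\rangle$. For this I would invoke a constant-term identity of Macdonald type for $\mathbb{H}$ with unequal parameters: in the appropriate specialisation the two parameters conspire so that the left-hand side becomes the constant term in question — the short roots, which carry the weights of $V_{\theta_s}$, and the long roots, which enter only through the density, playing genuinely different roles — while the right-hand side is a finite product of Gamma-like ratios indexed by the positive short roots of $R$. Collecting these factors according to the value $j$ of the special height of the associated positive long coroot of $R^{\vee}$ (equivalently, the height of the short root in $R$), and writing $c_j$ for the number of such roots, the identity reads
\begin{equation*}
\Bigl\langle\prod_{\alpha\in R_s}(1+q\,e^{\alpha}),\,1\Bigr\rangle=\prod_{j\geq1}\Bigl(\frac{1+q^{\,2j+1}}{1+q^{\,2j-1}}\Bigr)^{c_j}.
\end{equation*}
Since $c_1=r_s$ — the positive short roots of special height one being precisely the short simple roots — this product telescopes to $(1+q)^{-r_s}\prod_{j\geq1}(1+q^{2j+1})^{c_j-c_{j+1}}$, and multiplying by the prefactor $(1+q)^{r_s}$ gives $GM_0(q)=\prod_{j\geq1}(1+q^{2j+1})^{c_j-c_{j+1}}$.

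It then remains to recognise the right-hand side as $\prod_{i=1}^{r_s}(1+q^{2h_i+1})$. This is the elementary observation that the multiset in which each $j$ occurs with multiplicity $c_j-c_{j+1}$ is the partition dual to $(c_1\geq c_2\geq\cdots)$, that is, dual to the partition of the positive long coroots of $R^{\vee}$ by special height; by the very definition of the special exponents this dual partition has parts $h_1,\dots,h_{r_s}$, which completes the argument. As an independent check, Panyushev's classification \cite{5Pan} already forces $(\bigwedge V_{\theta_s})^{\mathfrak{g}}$ to be a free exterior algebra on $r_s$ homogeneous generators of odd degree, so $GM_0(q)$ must a priori be a product of exactly $r_s$ factors $1+q^{d_i}$ with the $d_i$ odd, consistent with the shape of the answer.

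The step I expect to be the genuine obstacle is the evaluation in the second paragraph: pinning down the correct degeneration of the unequal-parameter constant-term identity and checking, term by term, that it reproduces $\prod_{\alpha\in R_s}(1+q\,e^{\alpha})$ against the density, so that the special heights of the long coroots of $R^{\vee}$ emerge cleanly from the short-root specialisation. A secondary point that needs care is the combinatorics of special heights in the third paragraph — in particular the identity $c_1=r_s$, the fact that $(c_1\geq c_2\geq\cdots)$ is genuinely a partition, and the uniform verification across types $B$, $C$ and $F$ (the type $F_4$ case admitting no convenient classical model) that the exponents are $2h_i+1$ rather than some other linear shift.
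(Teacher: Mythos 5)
Your outline follows the same route as the paper: write the graded character as $(1\pm q)^{r_s}\prod_{\alpha\in R_s}(1\pm q\,e^{\alpha})$, evaluate $GM_0$ as a constant term via Macdonald--Cherednik theory with unequal parameters on the two $W$-orbits, and then convert the resulting product over positive short roots into the dual-partition statement. The identity you assert in your second paragraph is true, and it is precisely what the paper establishes; but as written it is the genuine gap in your argument, since you never say which unequal-parameter constant-term identity is being invoked nor why its specialisation computes $\bigl\langle\prod_{\alpha\in R_s}(1+q\,e^{\alpha}),1\bigr\rangle$ against the $k=1$ density --- and that evaluation is the entire analytic content of the theorem. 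The mechanism that closes it is the observation that $\chi_{\bigwedge V_{\theta_s}}(-q)=(1-q)^{r_s}\,\nabla_{2,1}/\nabla_{1}$, i.e.\ the product over the short roots is exactly the ratio of Macdonald weight functions for the labels $(k_s,k_l)=(2,1)$ and $(1,1)$. Hence $GM_0(-q)=(1-q)^{r_s}\langle 1,1\rangle_{2,1}$, which by the relation \eqref{d} between Macdonald's and Cherednik's pairings equals the expression \eqref{c} involving $ct(\Delta_{2,1})$ and $W(q^k)$; Cherednik's constant-term theorem with unequal labels and the product formula \eqref{e} then cancel down to \eqref{f},
\begin{equation*}
GM_0(-q)=(1-q)^{r_s}\prod_{\alpha\in R_s^{+}}\frac{1-q^{(\rho_k\,,\,\alpha^{\vee})+1}}{1-q^{(\rho_k\,,\,\alpha^{\vee})-1}},
\end{equation*}
which after $q\mapsto -q$ is your product, because in types $B,C,F$ with $(k_s,k_l)=(2,1)$ one has $(\rho_k\,,\,\alpha^{\vee})=2\,\mathrm{ht}\,\alpha$ for $\alpha\in R_s^{+}$. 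You need either to reproduce some such chain or to cite the precise identity you have in mind; ``the two parameters conspire in the appropriate specialisation'' does not yet constitute a proof of your displayed formula.

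Two smaller points in the endgame. First, your parenthetical equating the special height of the long coroot $\alpha^{\vee}$ with the height of the short root $\alpha$ is off by a factor of two: the special height is $2\,\mathrm{ht}\,\alpha$ in types $B,C,F$, and your exponents $2j\pm 1$ are correct only because your $j$ is in fact $\mathrm{ht}\,\alpha$, i.e.\ half the special height; this should be stated, and it is also the reason the argument does not extend to $G_2$, where the special heights $2,5,7$ are not all even and the answer $(1+q^3)(1+q^4)$ has a different shape. Second, your uniform telescoping-plus-conjugation step requires $c_1\geq c_2\geq\cdots$ (that the height counts of positive short roots form a partition); $c_1=r_s$ is clear, but the monotonicity is not argued in your proposal, and the paper avoids the issue by computing the counts case by case in types $B_r$, $C_r$ and $F_4$ (Tables \ref{(r_k,a)CrZ} and \ref{n}) before packaging the result via the special exponents. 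Your uniform formulation is cleaner, but it would need its own justification or the same case check.
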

    
    \begin{jtheorem}[Theorem~\ref{T2}]
    The graded multiplicity of the little adjoint module in its exterior algebra $\, \bigwedge V_{\theta_s}\,$ is
\begin{equation}\label{z2}   
    GM_{\theta_s}(q)=\prod_{i=1}^{r_s-1}(1+q^{2h_i+1})\sum_{i=1}^{r_s}(q^{2h_i-(2h_1-1)}+q^{2h_i}).
    \end{equation}
\end{jtheorem}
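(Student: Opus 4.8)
The plan is to read $GM_{\theta_s}(q)$ off the graded character of $\bigwedge V_{\theta_s}$ and to evaluate the resulting Weyl-type constant term with the $q$-symmetrizer of the double affine Hecke algebra attached to $R$ with its short- and long-root parameters left unequal; Theorem~A is the special case of the same computation with $V_{\theta_s}$ replaced by the trivial module.

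First I would record the graded character. Since the weights of $V_{\theta_s}$ are the short roots together with the zero weight with multiplicity $r_s$,
\[
\sum_{i\ge 0}q^{i}\operatorname{ch}\bigl(\textstyle\bigwedge^{i}V_{\theta_s}\bigr)=(1+q)^{r_s}\prod_{\alpha\in R_s^{+}}(1+qe^{\alpha})(1+qe^{-\alpha}).
\]
Pairing against $\chi_{\theta_s}$ in the Weyl inner product and using the Weyl character/denominator identity, $GM_{\theta_s}(q)$ equals the coefficient of $e^{\theta_s+\rho}$ in $(1+q)^{r_s}\,\delta\,\prod_{\alpha\in R_s^{+}}(1+qe^{\alpha})(1+qe^{-\alpha})$, with $\delta=\prod_{\gamma>0}(e^{\gamma/2}-e^{-\gamma/2})$; for Theorem~A one extracts the coefficient of $e^{\rho}$ instead.

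The heart of the argument is that $\prod_{\alpha\in R_s^{+}}(1+qe^{\alpha})(1+qe^{-\alpha})$ is a specialization of a Macdonald--Koornwinder weight function at a parameter value where its defining infinite products truncate; hence the DAHA of $R$ with unequal parameters — acting on the Weyl orbit $W\theta_s=R_s$, equivalently on the weight-orbit structure of $V_{\theta_s}$ — applies, together with Cherednik's evaluation/norm identity. Feeding the above weight into that identity (equivalently, pushing the relevant orbit sum through the $q$-symmetrizer and simplifying), the coefficient extraction collapses to a short $q$-series; the surviving $q$-exponents record by how much a subset of positive short roots shifts $\theta_s+\rho$ (resp.\ $\rho$). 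For $GM_0(q)$ this produces $\prod_i(1+q^{2h_i+1})$, and for $GM_{\theta_s}(q)$ the same product with its top factor removed, multiplied by a sum of $2r_s$ monomials; the splitting of that sum into the two families $q^{2h_i}$ and $q^{2h_i-(2h_1-1)}$ reflects the duality $\bigwedge^{\bullet}V_{\theta_s}\cong\bigwedge^{\dim V_{\theta_s}-\bullet}V_{\theta_s}$, which pairs the low occurrences of $V_{\theta_s}$ with the high ones. A posteriori this says that the $V_{\theta_s}$-isotypic component of $\bigwedge V_{\theta_s}$ is free over the subalgebra $\bigwedge\langle P_1,\dots,P_{r_s-1}\rangle$ of the skew invariants $\bigwedge\langle P_1,\dots,P_{r_s}\rangle$ (cf.\ Theorem~A and \cite{5Pan}), paralleling \cite{3Baz,4DPP} for the adjoint module.

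I expect the main obstacle to be the final identification: that the $q$-exponents produced by the symmetrizer are exactly $2h_i$ and $2h_i-(2h_1-1)$, i.e.\ that the ``sum of a subset of positive short roots'' statistic matches the special-height statistic on the positive long coroots of $R\,\check{}\,$ whose dual partition defines the $h_i$ (recall that the coroots of short roots are long in $(R_s^{+})\,\check{}\,$, \cite[Sect.~1.1]{9BFM}). This is genuinely type-dependent — the short roots are $r$ mutually orthogonal copies of $A_1$ in type $B$, a copy of $D_r$ in type $C$, and a copy of $D_4$ in type $F$ — so I would settle it case by case. A secondary point is to justify the constant-term manipulation at the non-generic parameter where the Macdonald weight truncates (no spurious poles): here the factor $(1+q)^{r_s}$ clears precisely the would-be denominators, consistent with $GM_{\theta_s}(-1)=0$.
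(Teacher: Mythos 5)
Your setup is sound and matches the paper's reduction: writing the graded character as $(1\pm q)^{r_s}$ times a truncated Macdonald-type weight with unequal parameters $(k_s,k_l)=(2,1)$, passing to the Cherednik/Macdonald inner product, and expecting a type-dependent identification of the surviving exponents with the special exponents $h_i$ is exactly how the paper proceeds, and your remark that $(1+q)^{r_s}$ clears the would-be poles is correct. The gap is at your declared ``heart of the argument.'' Cherednik's constant-term/norm and evaluation identities compute $\langle 1,1\rangle_{2,1}$, i.e.\ $ct(\Delta_{2,1})$, and hence give Theorem~A; they do \emph{not} compute the pairing $\langle 1,\chi_{\theta_s}\rangle_{2,1}$, equivalently the constant coefficient of the quasi-minuscule orbit sum against $\Delta_{2,1}$, which is what Theorem~B requires. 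Your assertion that ``feeding the weight into that identity \ldots the coefficient extraction collapses to a short $q$-series'' is precisely the step that needs a mechanism, and none is supplied. In the paper (following Bazlov's treatment of the adjoint case, which faces the same obstruction) this is done by: (i) an explicit computation of $Y^{\theta\,\check{}}e^{\theta_s}$ and $Y^{\theta\,\check{}}e^{0}$ in the two-parameter DAHA (Proposition~\ref{P1}), which requires a careful analysis of the chain of affine roots in $\hat{R}(\tau(\theta\,\check{}\,))$ and of which factors $t_s$ versus $t_l$ occur -- a genuinely new extension of Bazlov's one-parameter formulas; (ii) the unitarity of $Y^{\theta\,\check{}}$ with respect to $(\,\,,\,)$, which converts \eqref{Ye} into a closed formula for $(e^{\theta_s},1)/(1,1)$; and (iii) Lemmas~\ref{r}--\ref{Amax}, which use the Demazure--Lusztig operators and Weyl conjugacy of short roots to propagate this to $(e^{\alpha},1)$ for every $\alpha\in R_s$, and the $\bar{*}$-involution to get $(e^{-\alpha},1)$. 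Without an equivalent of (i)--(iii) (or, say, an independently proved formula for the constant coefficient of the quasi-minuscule Macdonald polynomial at unequal parameters), your argument does not produce the key intermediate formula \eqref{t}, and the splitting into the two monomial families $q^{2h_i}$ and $q^{2h_i-(2h_1-1)}$ cannot be \emph{derived} from Poincar\'e duality as you suggest -- in the paper those two families arise from $(e^{\alpha},1)$ and $(e^{-\alpha},1)$ respectively, and duality is only a consistency check.

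A secondary, smaller point: even granting the collapse, your ``subset of positive short roots shifting $\theta_s+\rho$'' statistic is left unidentified; in the paper the exponents are the weighted heights $(\sigma_k,\alpha)=ht_l\,\alpha+2\,ht_s\,\alpha$ of positive short roots and their reflections $2S+L+2-(\sigma_k,\alpha)$, and the case-by-case matching of these with the dual-partition exponents $h_i$ (Tables~\ref{(r_k,a)CrZ}, \ref{(s_ks,a)CrK}, \ref{n}, \ref{h}) is a real bookkeeping step, not an afterthought. So the proposal identifies the right framework but is missing the operator-theoretic core that makes the computation possible.
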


When $\,R$ is of type $\,G_2\,$ the graded multiplicity of $\,V_0\,$ and  $\,V_{\theta_s}\,$ respectively in $\, \bigwedge V_{\theta_s}\,$ are
\begin{align} 
        GM_0(q)&=(1+q^3)(1+q^4),\label{z3}\\
    GM_{\theta_s}(q)&=(1+q^3)(q+q^2+q^3). \label{z4}
    \end{align}
    
\subsection{The method of proof} To obtain $\,GM_0(q)$ in formulae \text{\eqref{z1}} and \text{\eqref{z3}} we use Cherednik's inner product on characters \text{\cite{10Mac}} evaluated on $\, \chi_{\bigwedge V_{\theta_s}}\,$ and $\,\chi_0.\,$ This reduces the graded multiplicity $\,GM_0(q)\,$ to a ratio of polynomials in terms of the multi-parameter Poincar\'e polynomial $\,W(q^k)\,$ \text{\cite{11KLW}} and Cherednik's generalisation of the constant term of Macdonald's weight function $\,\Delta.\,$ In the generalised $\,W(q^k)\,$ and $\,ct(\Delta)\,$ we use different integer coefficients for the heights of short and long simple roots and of their coroots. This arises from the unique integer labelling $\,k(\alpha)\,$ on the different Weyl orbits of $\,R\,$ which occurs in the formulae $\,ct(\Delta)\,$ and $\,W(q^k)\,$ appearing in $\,\chi_{\bigwedge V_{\theta_s}}$ and in its inner products with $\,\chi_0.\,$ The positive integer $\,k(\alpha)\,$ relates the indeterminates $\,q$ and $\,t$ occurring in $\,W(q^k)\,$ and $\,ct(\Delta).\,$ In section $3\,$ where we prove the formula for $\,GM_0(q),\,$ we set $\,t_{\alpha}=q^{k(\alpha)},\,$ see \text{\cite{10Mac}}.
Cancellations occur in the ratio of $\,W(q^k)\,$ and $\,ct(\Delta)\,$ which reduce simplifying $\,GM_0(q)\,$ to a task of counting special heights of coroots in $\,(R_s^+)\,\check{}.\,$ We do this case by case when $R\,$ is of $\,B, C, F\,$ or $\,G.\,$ We note that in type $\,B, C\,$ or $\,F,\,$ $\,GM_0(q)\,$ can be expressed in terms of the partition dual to the partition arising from the coroots in $\,(R_s^+)\,\check{}\,$ with respect to the special heights of these coroots.
\begin{remark} The formulae for $\,GM_0(q)\,$ in type $\,B, C\,$ and $\,F\,$ are given without proof and with no reference to the special exponents in Panyushev's classification of orthogonal irreducible $\,\mathfrak{g}$-modules with an exterior algebra of skew invariants. See \text{\cite[Table 1]{5Pan}}.  
\end{remark}

\par To prove the formulae for $\,GM_{\theta_s}(q)\,$ in \text{\eqref{z2}} and \text{\eqref{z4}} we use the action of the operator $\,Y^{\theta{}\,\check{}}$ from the double affine Hecke algebra $\,\mathcal{H}\!\!\!\mathcal{H}_{q,\,t_{s,\,l}}$ on a subset of the group algebra $\,\mathbb{Q}_{q,\,t_{s,\,l}}[\mathcal{P}]\,$ generated by the integral weight lattice $\,\mathcal{P}\,$ over the field $\,\mathbb{Q}(q^{\pm \frac{1}{d}},\,t^{\pm 1}_{s,\,l}),\,$ where $\,\theta\,$ is the highest root of $\,R,\,$ and we follow Bazlov's treatment of $\,GM_{[\bigwedge \mathfrak{g}:\mathfrak{g}]}(q)\,$ in \text{\cite{3Baz}} (Bazlov in calculating $\,GM_{[\bigwedge \mathfrak{g}:\mathfrak{g}]}(q)\,$ used the label $\,k(\alpha)=1\,$ for all $\,\alpha\in R,\,$ but in our case of $\,GM_{\lambda}(q)\,$ in $\,\bigwedge V_{\theta_s}\,$ we maintain our unique integer labelling $\,k(\alpha)\,$ on the Weyl orbits of $\,R:$ we use $\,k(\alpha)=2\,$ for $\,\alpha\in R_s\,$ and $\,k(\alpha)=1\,$ for $\,\alpha \in R_l)\,$. The unique labelling $k(\alpha)\,$ on the $\,W$-orbits of $R\,$ necessitates our use of $\,Y^{\theta{}\,\check{}}\in \mathcal{H}\!\!\!\mathcal{H}_{q,\,t_{s,\,l}}\,$ and the double parameter $\,t_{s,\,l}\,$ for the indeterminate $\,t\,$ in $\,\mathbb{Q}_{q,\,t}[\mathcal{P}].\,$ The outcome of these is a non-trivial generalisation of Bazlov's results on the action of $\,Y^{\theta{}\,\check{}}\,$ on $\,\mathbb{Q}_{q,\,t}[\mathcal{P}]\,$ in \text{\cite{3Baz}}.  We use the action of $\,Y^{\theta\,\check{}}\,$ (which is unitary with respect to Cherednik's inner product $\,(\,\,,\,)\,$ on $\,\mathbb{Q}_{q,\,t_{s,\,l}}[\mathcal{P}])\,$ on $\,e^{\theta_s},\,$ as well as some combinatorial properties of a subset of $\,R\,$ associated with $\,Y^{\theta\,\check{}\,}$ to calculate $\,(e^{\alpha}\, ,\,1)\,$ for all $\,\alpha \in R_s.\,$ From these, the task of simplifying $\,GM_{\theta_s}(q)$ is reduced to counting the special heights of the positive short roots of $\,R\,$. We do this case by case for the different root systems. In type $\,B, C\,$ or $\,F\,$ again, we note that formula \text{\eqref{z2}} obtained for $\,GM_{\theta_s}(q)\,$, expressible in terms of the special exponents of $\,(R_s^+)\,\check{},\,$ suggests that the isotypic component of $\, V_{\theta_s}\,$ is a free module over a subring of the skew invariants in $\, \bigwedge V_{\theta_s}\,$. Motivated by De Concini, Papi and Procesi's result on the isotypic component of $\,\mathfrak{g}\,$ in $\,\bigwedge \mathfrak{g}\,$ \text{\cite{4DPP}}, we conclude the paper with a conjecture on the structure of the isotypic component of $\, V_{\theta_s}\,$ in the exterior algebra$\, \bigwedge V_{\theta_s}.\,$ Note that to obtain $\,GM_{\theta_s}(q)\,$ in Sect. \text{\ref{p}} we set
$\,t_{\alpha}=q^{\frac{-k(\alpha)}{2}},\,$ see \text{\cite[Sect. 4]{17Mac}} and \text{\cite{3Baz}}.

\subsection{Main notation}
Let $\,R\,$ be a root system of $\,\mathfrak{g}\,$ of type $\,B, C, F\,$ or $\,G,\,$ i.e. $\,R_s\neq\varnothing.\,$ We fix a basis of simple roots $\,\Phi =\{\alpha_i\}_{i=1}^{r} \,$ for $\,R\,$ and the corresponding set of fundamental weights $\,\{\omega_i\}_{i=1}^{r}.\,$\par
Let $\,\mathcal{Q}\,$ and $\,\mathcal{P}\,$ be the root and the integral weight lattice of $\,R\,$ spanned by simple roots $\,\{\alpha_i\}_{i=1}^{r}\,$ and fundamental weights $\,\{\omega_i\}_{i=1}^{r}\,$ respectively. Let $\,\mathcal{P}^+\,$ be the set of dominant integral weights of $\,\mathcal{P}\,$ and let $\,W\,$ be the Weyl group of $\,R\,$ generated by the simple reflections $\,s_i=s_{\alpha_i}.\,$\\
We use subscripts $s$ and $l$ to mark
objects related to short and long roots respectively. For instance, $\theta\,$ and $\,\theta_s\,$ are the highest root and the highest short root of $\,R\,$ respectively. We denote by the ordered pair $\,(k_s\,\,,\,k_l)\,$  the label $\,k(\alpha) \in \mathbb{Z}^{>0}\,$ of short and long roots respectively.

\section{Preliminaries}
Let $\chi_{\lambda} \in \mathbb{Z}[\mathcal{P}]\,$ be the character of $\,V_{\lambda}\,$ such that $\chi_{\lambda}=\sum_{\mu \in \mathcal{P}(V_{\lambda})}m_{\lambda}^{\mu} e^{\mu},\,$ where $\,\mathcal{P}(V_{\lambda})\,$ is the set of weights of $\,V_{\lambda}\,$ and $\,m_{\lambda}^{\mu}\,$ is the dimension of the weight space $\,V_{\lambda}^{\mu}\,$ of $\,\mu\,$ in $\,V_{\lambda}.\,$ The following is from \text{\cite[5.1]{10Mac}}. Let $\,\mathbb{Q}_{q,\,t}[\mathcal{P}]\,$ be the group algebra of $\,\mathcal{P}\,$ generated by formal exponentials $\,e^{\lambda},\,$ $\,\lambda\in\mathcal{P},\,$ over the field $\,\mathbb{Q}_{q,\,t}\,$ of rational functions in $\,q^{\pm \frac{1}{d}}\,$ and $\,t^{\pm 1},\,$ where $\,d=\mid\mathcal{P}/\mathcal{Q}\mid.\,$ Let $f=\sum_{\lambda \in \mathcal{P}}f_{\lambda}e^{\lambda}\,$ be an element of the group algebra $\,\mathbb{Q}_{q,\,t}[\mathcal{P}]\,$ and let the bar and $\,*\,$ involutions on $\,\mathbb{Q}_{q,\,t}[\mathcal{P}]\,$ be defined as
\begin{align*}
    \bar{}&: e^{\lambda} \mapsto e^{-\lambda},\quad q \mapsto q,\qquad  t \mapsto t,\\
    *&: e^{\lambda} \mapsto e^{-\lambda},\quad q \mapsto q^{-1},\quad t \mapsto t^{-1},
\end{align*}
extended by $\mathbb{Q}$-linearity over $\,\mathbb{Q}_{q,\,t}[\mathcal{P}].\,$
Let the symmetric and non-dege\-ne\-rate inner product due to Macdonald on $\,\mathbb{Q}_{q,\,t}[\mathcal{P}]\,$ be defined as
\begin{equation}\label{<>}
\langle f\,\,,\,h \rangle = \frac{1}{\mid W \mid} ct(f\bar{h}\nabla),
\end{equation}
where the constant term $\,ct(f\bar{h}\nabla)\in \mathbb{Q}_{q,\,t}\,$ of $\,f\bar{h}\nabla\,$ is the coefficient of $\,e^0\,$ in $\,f\bar{h}\nabla,\,$
\begin{equation*}
\nabla=\prod_{\alpha \in R} \prod_{i=0}^\infty \frac{1-q^i e^{\alpha}}{1-q^{k(\alpha)+i}e^{\alpha}},
\end{equation*}
and the integer labelling $\,k(\alpha)\,$ on the $\,W$-orbits of $\,R$ relates the indeterminates $\,q$ and $\,t,$ here as $\,t_{\alpha}=q^{k(\alpha)}\,$ (see \text{\cite[(5.1.1)]{10Mac}}). Observe that $\, \nabla\,$ becomes a finite product when $\,k(\alpha) \in \mathbb{Z}^{>0}:\,$
\begin{equation*}
    \nabla=\prod_{\alpha \in R} \prod_{i=0}^{k(\alpha)-1} (1-q^i e^{\alpha}).
\end{equation*}
We define the inner product due to Cherednik on $\,\mathbb{Q}_{q,\,t}[\mathcal{P}]\,$ as
\begin{equation*}
(f\,\,,\,h)=ct(fh^*\Delta),
\end{equation*}
where
\begin{equation*}
\Delta=\prod_{\alpha \in R} \prod_{i=0}^\infty \frac{(1-q^i e^{\alpha})(1-q^{i+1} e^{\alpha})}{(1-q^{k(\alpha)+i}e^{\alpha})(1-q^{k(\alpha)+i+1} e^{\alpha})}.
\end{equation*}
Here also, when $\,k(\alpha) \in \mathbb{Z}^{>0},\,$ $\, \Delta\,$ becomes a finite product:
\begin{equation*}
    \Delta=\prod_{\alpha \in R} \prod_{i=0}^{k(\alpha)-1} (1-q^i e^{\alpha})(1-q^{i+1} e^{\alpha}).
\end{equation*}
If $\,f,h\,$ are $\,W$-invariant in $\,\mathbb{Q}_{q,\,t}[\mathcal{P}],\,$ then
\begin{equation}\label{d}
    \langle f\,\,,\,\bar{h}^*\rangle=\frac{1}{W(q^k)}(f\,\,,\,h),
\end{equation}
where $\,W(q^k)\,$ is the Poincar\'e multi-parameter polynomial
\begin{equation}\label{e}
    W(q^k)=\sum_{w\in W} q^{\sum_{\alpha\in R(w)} k(\alpha)}= \prod_{\alpha\in R^+} \frac{1-q^{(\rho_k\,\,,\,\alpha\,\check{}\,)+k(\alpha)}}{1-q^{(\rho_k\,\,,\,\alpha\,\check{}\,)}},
\end{equation}
 \text{\cite[Sect. 2]{11KLW}}, $\,R(w)=\{\alpha\in R^+ \mid w(\alpha)\in R^-\}$ and $\,\rho_k\,$ is the double parameter special weight \text{\eqref{b}}.
 \par Let $\,\langle \,\,,\, \rangle_1\,$ denote $\,\langle \,\,,\, \rangle \,$ when $\,(k_s\,,\,k_l)=(1\,,\,1).\,$ The irreducible characters $\,\chi_{\lambda}, \, \lambda \in \mathcal{P}^+ \,$ are orthonormal with respect to $\,\langle \,\,,\, \rangle_1,\,$ \text{\cite[Sect. 5.3.15]{10Mac}} and therefore form an orthonormal basis for $\,\mathbb{Z}[\mathcal{P}].\,$
 Hence, if $\,J=\bigoplus_{i=0}^d J_i\,$ is a graded $\,\mathfrak{g}$-module with graded character $\,\chi_{_J}(q)=\sum_{i=0}^d q^i\chi_{_{J_i}},\,$ then the grad\-ed multiplicity of a $\,\mathfrak{g}$-module $\,V_{\lambda}\,$ in $\,J$ is
\begin{equation}\label{GM[]}
    GM_{\lambda}(q)=\langle \chi_{_J}(q)\,\,,\,\chi_{\lambda}\rangle_1.
\end{equation}
Recall $\,\chi_{\theta_s}=r_s+\sum_{\alpha \in R_s}e^{\alpha}\,$ \text{\cite[2.9]{5Pan}}, where $\,r_s=\#(R_s\cap \Phi).\,$ It is easy to show that
\begin{equation*}
    \chi_{\bigwedge V_{\theta_s}}(-q)=(1-q)^{r_s} \prod_{\alpha \in R_s} (1-qe^{\alpha})=(1-q)^{r_s} \frac{\nabla_{2,1}}{\nabla_1},
\end{equation*}
where in $\,\nabla_{2,1}\,$ and $\,\nabla_1$ we use the ordered pair $\,(k_s\,\,,k_l)=(2\,\,,1)\,$ and $\,(k_s\,\,,k_l)=(1\,\,,1)\,$ respectively. By \text{\eqref{<>}} and \text{\eqref{GM[]}} the graded multiplicity of $\,V_{\lambda}\,$ in $\,\bigwedge V_{\theta_s}\,$ evaluated at $\,-q\,$ becomes
\begin{equation}\label{a}
    GM_{\lambda}(-q)=(1-q)^{r_s} \langle 1\,\,,\,\chi_{\lambda}\rangle_{2,1}.
\end{equation}
Since $\,\chi_{\bigwedge V_{\theta_s}}(q) \in \mathbb{Z}_{q\,}[\mathcal{Q}],\,$ $\,GM_{\lambda}(q)=0\,$ when $\,\lambda \notin \mathcal{Q}.\,$ The problem of finding the graded multiplicities of the irreducible characters of $\,\bigwedge V_{\theta_s}\,$ is therefore reduced to calculating $\,\langle 1 \,\,,\,\chi_{\lambda}\rangle_{2,1},\,$ for $\,\lambda \in \mathcal{P}^+\cap \mathcal{Q}.\,$ In the next sections we will calculate $\,GM_{\lambda}(q)\,$ for the two smallest dominant elements of $\,\mathcal{P}^+ \cap \mathcal{Q},\,$ i.e $\,\lambda = 0$ and $\,\theta_s,\,$ to prove our results \text{\eqref{z1} -- \eqref{z4}}.

\section{Calculating \texorpdfstring{$\,GM_0(q)\,$}{Lg}}
By \text{\eqref{<>}}, \text{\eqref{d}} and \text{\eqref{a}} the graded multiplicity of the trivial module $\,V_0\,$ in $\,\bigwedge V_{\theta_s}\,$ is given by
\begin{equation}\label{c}
    GM_0(-q)=\frac{(1-q)^{r_s} ct(\Delta_{2,1})}{\mid W\mid W(q^k)}.
\end{equation}
To simplify this we consider the following.
\subsection{The double parameter special weight \texorpdfstring{$\,\rho_k\,$}{Lg}}
Set $\,\rho_s= \frac{1}{2}\sum_{\alpha \in R_s^+} \alpha=\sum_{\alpha_i \in R_s} \omega_i\,\,$ and $\,
    \rho_l=\frac{1}{2}\sum_{\alpha \in R_l^+} \alpha= \sum_{\alpha_i \in R_l} \omega_i.\,$ See \text{\cite[(3.1.1)]{12Che}}.
The \textbf{double parameter special weight} $\,\rho_k\,$ is defined as
\begin{equation}\label{b}
    \rho_k=k_s \rho_s + k_l \rho_l=\frac{1}{2}\sum_{\alpha \in R^+} k(\alpha)\alpha=\sum_{i=1}^{r} k(\alpha_i)\omega_i.
\end{equation}
See \text{\cite[(3.2.2)]{12Che}}. We extend $\,(\rho_k\,,\,\,)\,$ over $\,\mathcal{Q}^+\,\check{}\,$ by $\,\mathbb{Z}$-linearity and say $\,\alpha\,\check{} \in \mathcal{Q}^+\,\check{}\,$ has special height $\,(\rho_k\,\,,\alpha\,\check{}\,).\,$
\par
The following theorem due to Cherednik \text{\cite[(3.3.2)]{12Che}} provides a generalisation of Macdonald's constant term of $\,\Delta\,$ by allowing different values for $\,k(\alpha)\,$ on the different $\,W $-orbits of $R.\,$
\begin{theorem}[Macdonald's Constant Term]  
The constant term of Macdonald's weight function $\,\Delta\,$ is
\begin{equation*}
    ct(\Delta) = \prod_{\alpha \in R^+} \prod_{i=1}^\infty \frac{(1-q^{(\rho_k\,\, , \, \alpha\,\check{}\,)\,+\,i})^2}{(1-t_\alpha q^{(\rho_k\,\, , \, \alpha\,\check{}\,)\,+\,i})(1-t^{-1}_\alpha q^{(\rho_k\,\, , \, \alpha\,\check{}\,)\,+\,i})},
\end{equation*}
where $\,t_{\alpha}=q^{k(\alpha)}.\,$
\end{theorem}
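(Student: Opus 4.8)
The plan is to treat this as Cherednik's $q$-analogue of the Macdonald constant-term conjecture in the unequal-parameter case, and to prove it from the polynomial representation of the double affine Hecke algebra $\mathcal{H}\!\!\!\mathcal{H}$ on $\mathbb{Q}_{q,t}[\mathcal{P}]$ with $t_\alpha=q^{k(\alpha)}$. The starting point is the identity $ct(\Delta)=(1\,,\,1)$ for Cherednik's pairing, so it suffices to compute the norm of the constant function $1$. For this I would introduce the Cherednik operators $Y^\lambda$, $\lambda\in\mathcal{P}$, whose joint eigenfunctions are the nonsymmetric Macdonald polynomials $E_\mu$, normalised so that $E_\mu$ is $e^\mu$ plus a combination of lower $e^\nu$; since the $Y^\lambda$ are unitary for $(\,,\,)$, the family $\{E_\mu\}_{\mu\in\mathcal{P}}$ is orthogonal, spans $\mathbb{Q}_{q,t}[\mathcal{P}]$, and $E_0=1$.

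The core of the argument is a recursion for the norms $(E_\mu\,,\,E_\mu)$. Using the intertwiners of $\mathcal{H}\!\!\!\mathcal{H}$ — the intertwiner built from $T_i$ attached to each finite simple reflection together with the element implementing the affine translation — each of which carries $E_\mu$ to a scalar multiple of $E_{\mu'}$ with $\mu'$ obtained from $\mu$ by one affine simple reflection, I would express $(E_{\mu'}\,,\,E_{\mu'})/(E_\mu\,,\,E_\mu)$ through the intertwiner and its adjoint as an explicit rational function in $q$ and the $t_\beta$, built from binomials $1-q^{c}t_\beta^{\pm1}$ attached to the affine root crossed. Telescoping along any reduced path in the affine Weyl group from $0$ to $\mu$ then gives $(E_\mu\,,\,E_\mu)=(1\,,\,1)\,N_\mu(q,t)$, with $N_\mu$ an explicit product over the finite roots whose affine reflections occur on the path; independence of $N_\mu$ from the chosen path follows from the braid relations in $\mathcal{H}\!\!\!\mathcal{H}$. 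Letting $\mu$ run out to infinity in a strictly dominant direction, the finite products $N_\mu$ stabilise to the infinite product over $R^+$ on the right-hand side, up to the factor $(1\,,\,1)$.

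It remains to evaluate $(1\,,\,1)=ct(\Delta)$ itself, which I expect to be the main obstacle, since the recursion above only pins down ratios of norms. Here I see two routes. The first is Cherednik's: establish in parallel the evaluation formula for the principal specialisation of $E_\mu$ and the duality exchanging the roles of the index $\mu$ and the evaluation point, then substitute these into the norm identity via the reproducing (Poisson) kernel to force $(1\,,\,1)$ into the stated form. The second route, which is cleaner in the case at hand because $k(\alpha)\in\mathbb{Z}^{>0}$, is to induct on $\sum_{\alpha\in R^+}k(\alpha)$ using Macdonald's shift operator, which multiplies $ct(\Delta)$ by a computable product when a single label $k(\alpha_i)$ is raised by one; the base case $k\equiv 0$ has $\Delta=1$, hence $ct(\Delta)=1$, which matches the right-hand side since every factor there equals $1$ when $t_\alpha=1$. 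In either route the last step is the elementary cancellation $\prod_{i\ge1}\frac{1-q^{h+i}}{1-q^{k+h+i}}=\prod_{j=h+1}^{h+k}(1-q^{j})$, legitimate because the labels are positive integers, which rewrites the stated infinite product as a finite product of cyclotomic-type factors indexed by the special heights $(\rho_k\,,\,\alpha\,\check{}\,)$. The genuinely delicate bookkeeping, and the reason one cannot just quote the equal-parameter Macdonald conjecture, is that the two $W$-orbits of $R$ carry different labels $k_s,k_l$, so one must control how the special heights evolve under the reflections, respectively under the parameter shift.
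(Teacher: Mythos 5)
This theorem is not proved in the paper at all: it is imported verbatim from Cherednik \cite[(3.3.2)]{12Che} (see also Macdonald \cite{10Mac}), so there is no internal argument to compare yours against; what you have written is an outline of the standard Cherednik--Macdonald proof that lives in those references. Within that outline, the load-bearing parts are correctly identified but unevenly weighted. The first half (nonsymmetric polynomials $E_\mu$, unitarity of the $Y$'s, intertwiner recursion for $(E_\mu\,,\,E_\mu)/(1\,,\,1)$, then a limit in a dominant direction) cannot by itself produce $ct(\Delta)=(1\,,\,1)$ — as you concede, it only controls ratios, and in fact the norm ratios tend to $1$ as $\mu$ goes to infinity dominantly, so the limiting step contributes nothing and could be dropped without loss. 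The theorem therefore rests entirely on your second route: either Cherednik's duality--evaluation--Poisson-kernel machinery, or the shift-operator induction on the labels. The latter is the cleaner path here and is legitimate in the unequal-parameter setting you need, because shift operators exist separately for each $W$-orbit of roots (this is exactly how \cite{10Mac} and \cite{12Che} handle labels $k_s\neq k_l$), so one may raise $k_l$ and $k_s$ independently starting from $k\equiv 0$, where $\Delta=1$; the "delicate bookkeeping" you flag is precisely the computation of how the constant term changes under one such shift, and it is carried out in those sources, not in this paper. Your final telescoping identity, rewriting the infinite product as the finite product over $i=1,\dots,k(\alpha)$, is correct and is exactly the paper's passage from the displayed formula to \eqref{z5}. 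So: acceptable as a proof plan, but be aware it is a reconstruction of the cited external proof, with the genuinely hard step (the shift-operator or duality computation) only gestured at rather than executed.
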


\par Observe that when $\,k(\alpha) \in \mathbb{Z}^{\geq 0},\, ct(\Delta)\,$ becomes a finite product:
\begin{equation}\label{z5}
    ct(\Delta)=\prod_{\alpha \in R^+} \prod_{i=1}^{k(\alpha)} \frac{1-q^{(\rho_k\,\, , \, \alpha\,\check{}\,)\,+\,i}}{1-q^{(\rho_k\,\, , \, \alpha\,\check{}\,)\,+1-\,i}}.
\end{equation}
By \text{\eqref{e}}, \text{\eqref{b}}, \text{\eqref{z5}} and using $\,(k_s\,\,,k_l)=(2\,\,,1),\,$ $\,GM_0(-q)\,$ in \text{\eqref{c}} simplifies to
\begin{equation}\label{f}
   GM_0(-q)=(1-q)^{r_s} \prod_{\alpha \in R_s^+} \frac{1-q^{(\rho_k\,\, , \, \alpha\,\check{}\,)\,+\,1}}{1-q^{(\rho_k\,\, , \, \alpha\,\check{}\,)\,-\,1}}. 
\end{equation}
\par Let $\,Z\,\check{}\subseteq (R_s^+)\,\check{}.\,$ We denote by $\,H_{Z\,\check{}}(n)\,$ the set $\,\{\alpha\,\check{} \in Z\,\check{} \mid (\rho_k\,\, , \, \alpha\,\check{}\,)=n\}.\,$ Let $\,h_{Z\,\check{}}(n)=\#H_{Z\,\check{}}(n).\,$ Observe that $\,\alpha\,\check{} \in H_{R_s^+\,\check{}}(n-1)\,$ gives the factor $\,1-q^n\,$ in \text{\eqref{f}}, while $\,\alpha\,\check{} \in H_{R_s^+\,\check{}}(n+1)\,$ gives $\,(1-q^n)^{-1}.\,$ Therefore, since $\,\theta_s\check{}\,$ is the highest coroot in $\,(R_s^+)\,\check{}\,$ \text{\cite[Lemma 5.1.4]{13PR}}, $\,GM_0(-q)\,$ becomes
\begin{equation}\label{g}
        GM_0(-q)=(1-q)^{r_s}\prod_{n=1} ^{(\rho_k\, , \, \theta_s\check{}\,) +1} (1-q^n)^{h_{R_s^+\,\check{}}(n-1)-h_{R_s^+\,\check{}}(n+1)}.
\end{equation}
Simplifying $\,GM_0(q)\,$ is therefore reduced to a task of counting the special heights of positive long roots in the dual root system $R\,\check{}\,$ (recall that the coroots of the roots in $\,R_s$ have the long length in $\,R\,\check{},\,$ \text{\cite[Sect. 1.1]{9BFM}}). We do this case by case when $\, R \,$ is of type $\, B, C, F\,$ or $\,G.\,$

\subsubsection{Type \texorpdfstring{$\,B_r\,$}{Lg}}
Recall that the sets of positive short roots and their coroots in type $\,B_r\,$ are
\begin{align*}
R_s^+&=\{\sum_{i\leq m\leq r} \alpha_m, \,\, 1\leq i \leq r\},\\
    (R_s^+)\,\check{}&=\{\sum_{i\leq m < r} 2\alpha\,\check{}_m +\alpha\,\check{}_r,\, \, 1\leq i \leq r\},
\end{align*}
where $\,\Phi_l=\{\alpha_1,\cdots,\alpha_{r-1}\},\,$ $\,\Phi_s=\{\alpha_r\}\,$ and the highest coroot $\,\theta_s\,\check{}=2\alpha\,\check{}_1+\cdots+2\alpha\,\check{}_{r-1}+\alpha\,\check{}_r.\,$  See \text{\cite[Plate II]{14Bou}}.
With $\,(k_s\,\,,\,k_l)=(2\,\,,\,1)\,$ in \text{\eqref{a}}, $\,(\rho_k\, , \, \sum_{i\leq m < r} 2\alpha\,\check{}_m +\alpha\,\check{}_r)=2(r-i+1).\,$ Hence, $\,h_{R_s^+\,\check{}}(2i)=1,\,1\leq i \leq r,\,$ otherwise, $\,h_{R_s^+\,\check{}}(n)=0.\,$ Therefore, $\,h_{R_s^+\,\check{}}(n-1)=h_{R_s^+\,\check{}}(n+1)\,$
for $\,2\leq n \leq 2r.\,$ We substitute this in \text{\eqref{g}} and obtain in type $\,B_r\,$
\begin{equation}\label{j}
    GM_0(q)=1+q^{2r+1}.
\end{equation}
\begin{remark}
Formula \text{\eqref{j}} implies that the only skew invariants of the Lie\- group $\,SO_{2r+1}(\mathbb{C})\,$ in its standard module are the scalars and the volume form.
\end{remark}
\subsubsection{Type \texorpdfstring{$\,C_r\,$}{Lg}}
Recall that the set of the positive short roots in type $\,C_r\,$ is given as
\begin{align*}
    R_s^+&=J \cup K\nonumber\\
   & =\{\sum_{i\leq m<j} \alpha_m \} \, \bigcup \, \{\sum_{i\leq m < j} \alpha_m+\sum_{j\leq m < r}2\alpha_m + \alpha_r\}, \quad 1\leq i<j \leq r;
\end{align*}
whereas their coroots are
\begin{align}\label{z18}
(R_s^+)\,\check{}&=J\,\check{} \cup K\,\check{},\nonumber\\
    J\,\check{}&=\{\sum_{i\leq m < j} \alpha\,\check{}_m, \quad 1\leq i<j \leq r\},\nonumber\\
    K\,\check{}&=\{\sum_{i\leq m < j} \alpha\,\check{}_m+ \sum_{j \leq m \leq r}2\alpha\,\check{}_m, \quad 1\leq i<j \leq r \};
\end{align}
where $\,\Phi_s=\{\alpha_1,\cdots,\alpha_{r-1}\},\,$ $\,\Phi_l=\{\alpha_r\}\,$ and $\,\theta_s\,\check{}=\alpha\,\check{}_1+2\alpha\,\check{}_2+\cdots+2\alpha\,\check{}_{r-1}+2\alpha\,\check{}_r.\,$ See \text{\cite[Plate III]{14Bou}}.
\par Let $\,Z\,\check{}\subseteq (R_s^+)\,\check{}.\,$ We denote $\,h_{Z\,\check{}}(n)\,$ by $\,h^o_{Z\,\check{}}(n)\,$ (respectively $\,h^e_{Z\,\check{}}(n))\,$ when the rank $\,r$ of $\,\mathfrak{g}\,$ is odd (respectively even). By \text{\eqref{z18}} we have
\begin{equation*}
H_{J\,\check{}}(2p)=\{\alpha\check{}_i+\alpha\check{}_{i+1}+\cdots+\alpha\check{}_{i+p-1},\quad 1\leq p \leq r-1,\,\, 1\leq i \leq r-p\}.
\end{equation*}
Since $\,(k_s\, , \,k_l)=(2\,\,,1),\,$
\begin{equation}\label{z14}
h_{J\,\check{}}(2p)=r-p, \qquad 1\leq p \leq r-1.
\end{equation}
Let $\,\beta_{i, j}=\sum_{i\leq m < j} \alpha\check{}_m +\sum_{j\leq m \leq r} 2\alpha\check{}_m \in K\,\check{} ,\,\, 1\leq i<j \leq r.\,$ Then $\,(\rho_k\,,\,\beta_{i, j})=2(j-i)+4(r-j)+2
\,$ and $\,(\rho_k\,,\,\beta_{i, j}) \in \{4,8, \cdots, 4r-4\}.\,$
\par Observe that when $\,r\,$ is odd and $\,p=1,3,\cdots,r-2,\,$ respectively when $\,r\,$ is even and $\,p=1,3,\cdots,r-3,\,$ we have
\begin{align*}
        H_{K\,\check{}}(2p+2)=&\{\alpha\check{}_{r-\frac{p+1}{2}}+2\alpha\check{}_{r-\frac{p-1}{2}}+\cdots+2\alpha\check{}_{r},\,\alpha\check{}_{r-\frac{p+3}{2}}+\cdots+\alpha\check{}_{r-\frac{p-1}{2}}+\nonumber\\
        &2\alpha\check{}_{r-\frac{p-3}{2}}+\cdots+2\alpha\check{}_{r},\cdots,\, \alpha\check{}_{r-p}+\cdots+\alpha\check{}_{r-1}+2\alpha\check{}_{r}\},\\
       H_{K\,\check{}}(2p+4)=&\{\alpha\check{}_{r-\frac{p+3}{2}}+\cdots+\alpha\check{}_{r-1}+2\alpha\check{}_{r}, \,\alpha\check{}_{r-\frac{p+5}{2}}+\cdots+\alpha\check{}_{r-\frac{p-1}{2}}+\nonumber\\&
       2\alpha\check{}_{r-\frac{p-3}{2}}+\cdots+2\alpha\check{}_{r},\cdots,\,\alpha\check{}_{r-(p+1)}+\cdots+\alpha\check{}_{r-1}+2\alpha\check{}_{r}\},\\ H_{K\,\check{}}(4r-2p-4)=&\{\alpha\check{}_{1}+\cdots+\alpha\check{}_{p+1}+2\alpha\check{}_{p+2}+\cdots+2\alpha\check{}_{r},\,\alpha\check{}_{2}+\cdots+\alpha\check{}_{p}+\nonumber\\
        &2\alpha\check{}_{p+1}+\cdots+2\alpha\check{}_{r},\cdots,\, \alpha\check{}_{\frac{p+1}{2}}+\alpha\check{}_{\frac{p+3}{2}}+2\alpha\check{}_{\frac{p+5}{2}}+\cdots+
        \nonumber\\
        &2\alpha\check{}_{r}\},\nonumber\\
       H_{K\,\check{}}(4r-2p-2)=&\{\alpha\check{}_{1}+\cdots+\alpha\check{}_{p}+2\alpha\check{}_{p+1}+\cdots+2\alpha\check{}_{r},\,\alpha\check{}_{2}+\cdots+\alpha\check{}_{p-1}+\nonumber\\
        &2\alpha\check{}_{p}+\cdots+2\alpha\check{}_{r},\cdots,\, \alpha\check{}_{\frac{p+1}{2}}+2\alpha\check{}_{\frac{p+3}{2}}+\cdots+2\alpha\check{}_{r}\}, \\
        H^e_{K\,\check{}}(2r)=&\{\alpha\check{}_{1}+\cdots+\alpha\check{}_{r-1}+2\alpha\check{}_{r},\, \alpha\check{}_{2}+\cdots+\alpha\check{}_{r-2}+2\alpha\check{}_{r-1}+2\alpha\check{}_{r},\nonumber\\& \cdots,\, \alpha\check{}_{\frac{r}{2}}+2\alpha\check{}_{\frac{r+2}{2}}\cdots+2\alpha\check{}_{r}\}.
\end{align*}
Hence, 
\begin{align}\label{z15}
    h^o_{K\,\check{}}(n)=h^e_{K\,\check{}}(n)&=\frac{p+1}{2}\quad\text{and}\nonumber\\
     h^e_{K\,\check{}}(2r)&=\frac{r}{2}
     \end{align}
whenever $\,n\in \{2p+2, 2p+4, 4r-2p-2, 4r-2p-4\},\,r\in 2\mathbb{Z}+1 \,\text { and }\, p=1,3,\cdots,r-2,\,$ respectively $\,r\in 2\mathbb{Z} \,\text { and }\, p=1,3,\cdots,r-3.\,$
\par Table \text{\ref{(r_k,a)CrZ}} is obtained from \text{\eqref{z14}} and \text{\eqref{z15}}. The reader can easily check that the cardinalities in columns $\,2$--$6\,$ in Table \text{\ref{(r_k,a)CrZ}} match the cardinalities of $\,J\,\check{},\,K\,\check{}\,$ and $\,R_s^+\,\check{}\,$ in \text{\eqref{z18}}.

\begin{table}[htb]
\centering
\begin{tabular}{ |c|c|c|c|c|c| }
\hline
$n$ & $h_{J\,\check{}}(n)$ & $h^o_{K\,\check{}}(n)$ & $h^e_{K\,\check{}}(n)$ & $h^o_{R_s^+\,\check{}}(n)$ & $h^e_{R_s^+\,\check{}}(n)$\\
\hline
$2$ & $ r-1$ & $0$ & $0$ & $r-1$ & $r-1$\\
\hline
$4$ & $r-2$ & $1$ & $1$ & $r-1$ & $r-1$\\
\hline
$6$ & $r-3$ & $1$ & $1$ & $r-2$ & $r-2$\\
\hline
$8$ &$ r-4$ & $2$ & $2$ & $r-2$ & $r-2$\\
\hline
$\vdots$ & $\vdots$ & $\vdots$ & $\vdots$ & $\vdots$ & $\vdots$\\
\hline
$2r- 6$ & $3$ &$\frac{r-3}{2}$ & $\frac{r-4}{2}$& $\frac{r+3}{2}$& $\frac{r+2}{2}$\\
\hline
$2r- 4$ & $2$ &$\frac{r-3}{2}$ & $\frac{r-2}{2}$& $\frac{r+1}{2}$& $\frac{r+2}{2}$\\
\hline
$2r-2$ & $1$ &$\frac{r-1}{2}$ & $\frac{r-2}{2}$& $\frac{r+1}{2}$& $\frac{r}{2}$\\
\hline
$2r$ & $0$ &$\frac{r-1}{2}$ & $\frac{r}{2}$& $\frac{r-1}{2}$& $\frac{r}{2}$\\
\hline
$2r+2$ & $0$ &$\frac{r-1}{2}$ & $\frac{r-2}{2}$& $\frac{r-1}{2}$& $\frac{r-2}{2}$\\
\hline
$2r+4$ & $0$ &$\frac{r-3}{2}$ & $\frac{r-2}{2}$& $\frac{r-3}{2}$& $\frac{r-2}{2}$\\
\hline
$\vdots$ & $\vdots$ & $\vdots$ & $\vdots$ & $\vdots$ & $\vdots$\\
\hline
$4r-6$ &$0$ & $1$ & $1$& $1$ & $1$\\
\hline
$4r-4$ & $0$ & $1$ & $1$ & $1$ & $1$\\
\hline
\end{tabular}

\caption{The multiplicity of the special heights of the positive long coroots in type $\,C_r\,$ }\label{(r_k,a)CrZ}
\end{table}

Observe that for any rank $\,r\,$
\begin{equation}\label{i}
    h_{R_s^+\,\check{}}(n-1)=h_{R_s^+\,\check{}}(n+1)+1 \quad\text{ when }\, n=4p+1, \, p = 1,\cdots, r-1,
    \end{equation} 
   otherwise, when $\,n\neq1\,$
    \begin{equation}\label{z6}
    h_{R_s^+\,\check{}}(n-1)=h_{R_s^+\,\check{}}(n+1).
\end{equation}
We substitute \text{\eqref{i}} and \text{\eqref{z6}} into \text{\eqref{g}} and $GM_0(q)\,$ in type $\,C_r\,$ simplifies to
\begin{equation}\label{m}
    GM_0(q)=\prod_{p=1}^{r-1}(1+q^{4p+1}).
\end{equation}

\subsubsection{Type \texorpdfstring{$\,F_4\,$}{Lg}}\label{zz}
Let $\,\sigma_k=\sum_{i =1}^r k(\alpha_i)\omega_i\,\check{},\,$ where $\,\omega\,\check{}_i\,$ are the coweights of the fundamental weights of $\,\mathcal{P}\,$ dual to the simple roots of $R.\,$ With $\,(k_s\,\,,\,k_l)=(2\,\,,\,1),\,$ $\,(\sigma_k\,\,,\,\alpha)= ht_l\, \alpha+2ht_s\, \alpha,\,$ where $\, ht_m\,\alpha = \sum_{\alpha_i \in \Phi_m} \lambda_i,\,\,m\in \{s,\,l\},\,$ when $\alpha=\sum_{i=1}^r\lambda_i\alpha_i.\,$ 
\par From \text{\cite[Sect. VI: 4.9, Plate VIII]{14Bou}} the positive short roots and their coroots in type $\,F_4\,$ are as in Table~\text{\ref{n}}.

\begin{table}[htb]
    \centering
\begin{tabular}{ |c|c|c|c| }
\hline
$ \alpha \in R^+_s $ & $ \alpha\,\check{} $ & $ (\sigma_k \, , \, \alpha)$ & $ (\rho_k \, , \, \alpha\,\check{}\,)$\\
 \hline
 $\alpha_3$ & $\alpha\,\check{}_3$ & $2$ & $2$\\
 \hline
 $\alpha_4$ & $\alpha\,\check{}_4$ & $2$ & $2$\\
 \hline
 $\alpha_2 + \alpha_3$ & $2\alpha\,\check{}_2 + \alpha\,\check{}_3$ & $3$ & $4$\\
 \hline
 $\alpha_3 + \alpha_4$ & $\alpha\,\check{}_3 + \alpha\,\check{}_4$ & $4$ & $4$\\
 \hline
 $\alpha_1 + \alpha_2+\alpha_3$ & $2\alpha\,\check{}_1 + 2\alpha\,\check{}_2 +\alpha\,\check{}_3$ & $4$ & $6$\\
 \hline
 $\alpha_2 + \alpha_3+\alpha_4$ & $2\alpha\,\check{}_2 +\alpha\,\check{}_3+ \alpha\,\check{}_4$ & $5$ & $6$\\
 \hline
  $\alpha_1 +\alpha_2 +\alpha_3 +\alpha_4$ & $2\alpha\,\check{}_1 +2\alpha\,\check{}_2 +\alpha\,\check{}_3 +\alpha\,\check{}_4$ & $6$ & $8$\\
 \hline
 $\alpha_2 +2\alpha_3 +\alpha_4$ & $2\alpha\,\check{}_2 +2\alpha\,\check{}_3 +\alpha\,\check{}_4$ & $7$ & $8$\\
 \hline
 $\alpha_1 +\alpha_2 + 2\alpha_3 +\alpha_4$ & $2\alpha\,\check{}_1 +2\alpha\,\check{}_2 +2\alpha\,\check{}_3 + \alpha\,\check{}_4$ & $8$ & $10$\\
 \hline
 $\alpha_1 +2\alpha_2 + 2\alpha_3 +\alpha_4$ & $2\alpha\,\check{}_1 + 4\alpha\,\check{}_2 +2\alpha\,\check{}_3 +\alpha\,\check{}_4$ & $9$ & $12$\\
 \hline
 $\alpha_1 + 2\alpha_2+3\alpha_3+\alpha_4$ & $2\alpha\,\check{}_1 + 4\alpha\,\check{}_2 +3\alpha\,\check{}_3 +\alpha\,\check{}_4$ & $11$ & $14$\\
 \hline
 $\alpha_1+2\alpha_2 + 3\alpha_3+2\alpha_4$ & $2\alpha\,\check{}_1 + 4\alpha\,\check{}_2 +3\alpha\,\check{}_3 +2\alpha\,\check{}_4$ & $13$ & $16$\\\hline
\end{tabular}
\caption{The special heights of the short roots and of their coroots in type $F_4$} \label{n}
\end{table}

We substitute Table \text{\ref{n}} into \text{\eqref{g}} and obtain $\,GM_0(q)\,$ in type $\,F_4\,$ as
\begin{equation}\label{u}
        GM_0(q)=(1+q^9)(1+q^{17}).
        \end{equation}

\subsubsection{Type \texorpdfstring{$\,G_2\,$}{Lg}}
The positive short roots and their coroots in the root system of type $G_2\,$ are as given in Table~\text{\ref{h}}. See \text{\cite[Sect. VI: 4.13, Plate IX]{14Bou}}.
\begin{table}[htb]
\centering
\begin{tabular}{ |c|c|c|c| }
\hline
$ \alpha \in R_s^+ $ & $ \alpha\,\check{} $ & $ (\sigma_k \, , \, \alpha)$ & $ (\rho_k \, , \, \alpha\,\check{}\,)$\\
 \hline
 $\alpha_1$ & $\alpha\,\check{}_1$ & $2$ & $2$\\
 \hline
 $\alpha_1 + \alpha_2$ & $\alpha\,\check{}_1 + 3\alpha\,\check{}_2$ & $3$ & $5$\\
 \hline
 $2\alpha_1 + \alpha_2$ & $2\alpha\,\check{}_1 + 3\alpha\,\check{}_2$ & $5$ & $7$\\
 \hline
\end{tabular}
    \caption{The special heights of the positive short roots and the positive long coroots in type $\,G_2\,$}\label{h}
    \end{table}
    
We substitute  Table \text{\ref{h}} into \text{\eqref{g}} and get $\,GM_0(q)\,$ in type $\,G_2\,$ as
\begin{equation}\label{w}
        GM_0(q)=(1+q^3)(1+q^{4}).
\end{equation}
\par The reader can easily check that the degree of $\,GM_0(q)\,$ for the different root systems is indeed $\,\dim V_{\theta_s}=r_s+\#R_s.\,$ This is exactly what we expect since if $\dim V=d,\,$ then $\,\bigwedge V= \bigwedge^0 V\oplus \cdots \oplus \bigwedge^d V\,$ and $\,\bigwedge^d V \subset (\bigwedge V)^{\mathfrak{g}}\,$ since $\,\bigwedge^d V$ is isomorphic to the trivial module of $\,\mathfrak{g}.$
\par The factorisation of $\,GM_0(q)\,$ obtained with the odd powers of $\,q\,$ when $\,R\,$ is of type $\,B, C\,$ or $\,F\,$ suggests that the skew invariants $\,(\bigwedge V_{\theta_s})^{\mathfrak{g}}\,$ form an exterior algebra over primitive invariant generators in $\,\bigwedge V_{\theta_s}\,$ in these cases of $\,R.\,$ This indeed is the case as $\,V_{\theta_s}\,$ in type $\,B, C\,$ or $\,F\,$ is included in Panyushev's  classification of orthogonal $\,\mathfrak{g}$-modules with an exterior algebra of skew invariants; see \text{\cite[Table 1]{5Pan}}. In type $\,G_2,\,$ let $\,(\bigwedge V_{\theta_s})^{\mathfrak{g}}=span\, \{T_0,T_3,T_4,T_7\,\},\,$ where $\,T_i\,$ is the invariant generator of degree $i\,$ in $\,\bigwedge V_{\theta_s}.\,$ By \text{\cite[Lemma 1.3]{5Pan}} $\,(\bigwedge V_{\theta_s})^{\mathfrak{g}}\,$ does not form an exterior algebra over primitive generators. Indeed, since $\,T_0,T_4,T_7 \in Z(\bigwedge V_{\theta_s}),\,$ the centre of $\,\bigwedge V_{\theta_s},\,$ the algebra of skew invariants $\,(\bigwedge V_{\theta_s})^{\mathfrak{g}}\,$ is commutative when $\,R\,$ is of type $\,G_2.\,$

\subsection{\texorpdfstring{$\,GM_0(q)\,$}{Lg} and the special exponents of 
\texorpdfstring{$\,(R_s^+)\,\check{}\,$}{Lg}}
In what follows we define the special exponents of $\,(R_s^+)\,\check{}\,$ when $\,R\,$ is of type $\,B, C, F\,$ or $\,G\,$ and give as one of our main results: $\,GM_0(q)\,$ expressed in terms of these special exponents.
\begin{definition}
Let $\,R\,$ be of type $\,B, C, F\,$ or $\,G\,$. The \textbf{special exponents} of $\,(R_s^+)\,\check{},\, h_1 \leq \cdots \leq h_{r_s}\,$ are the partition dual to the partition formed by the positive long roots with respect to their special height in the dual root system $\,R\,\check{}.\,$
\end{definition} 
The list $\,\{h_i\}_{i=1}^{r_s}\,$ in each of these types of $\,R$ is given as
\begin{equation*}
    B_r : \{r\}, \quad
    C_r : \{2i\}_{i=1}^{r-1}, \quad
    F_4 :\{4,8\},\quad
    G_2 :\{3\}.
\end{equation*}
See Figure \text{\ref{z12}}.

\begin{theorem}\label{T1}
Let $\,R\,$ be of type $\,B, C$ or $F\,$. Let $\, h_1,\cdots,h_{r_s}\,$ be the special exponents of $\,(R_s^+)\,\check{}.\,$ The graded multiplicity of the trivial module $\,V_0\,$ in $\,\bigwedge V_{\theta_s}\,$ is given by
 \begin{equation*}
 GM_0(q)=(1+q^{2h_1+1})\cdots(1+q^{2h_{r_s}+1}).
 \end{equation*}
 When $\,R\,$ is of type $\,G_2, \, GM_0(q)\,$ in $\,\bigwedge V_{\theta_s}\,$ is
\begin{equation*}
 GM_0(q)=(1+q^{h_1})(1+q^{h_{1}+1}).  
 \end{equation*}
\end{theorem}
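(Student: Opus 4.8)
The claim is a uniform reformulation of the type-by-type identities \eqref{j}, \eqref{m}, \eqref{u}, \eqref{w} already obtained, so the plan is to show that each of those identities coincides with the displayed product over special exponents, and to isolate the mechanism that makes a uniform product possible.

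For types $B$, $C$ and $F$ I would argue in one stroke from \eqref{g}. Inspecting the coroot data of Section~3 (the displayed formulas for $(R_s^+)\check{}$ in types $B_r$, $C_r$, and Table~\ref{n} for $F_4$) one sees that with $(k_s,k_l)=(2,1)$ every coroot in $(R_s^+)\check{}$ has an \emph{even} special height, that $h_{R_s^+\check{}}(2)=r_s$ (these are exactly the short simple coroots), and that $h_{R_s^+\check{}}(0)=0$. Hence in \eqref{g} the exponent $h_{R_s^+\check{}}(n-1)-h_{R_s^+\check{}}(n+1)$ is zero for even $n$; writing the odd indices as $n=2m+1$, the $m=0$ term contributes $(1-q)^{-r_s}$ and cancels the prefactor $(1-q)^{r_s}$, leaving $GM_0(-q)=\prod_{m\geq 1}(1-q^{2m+1})^{\,h_{R_s^+\check{}}(2m)-h_{R_s^+\check{}}(2m+2)}$. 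Replacing $q$ by $-q$ and using that $2m+1$ is odd yields $GM_0(q)=\prod_{m\geq 1}(1+q^{2m+1})^{e_m}$ with $e_m=h_{R_s^+\check{}}(2m)-h_{R_s^+\check{}}(2m+2)\geq 0$.

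It then remains to identify $e_m$ with the number of special exponents equal to $m$. By definition the $h_i$ form the transpose of the partition $\mu$ whose part $\mu_j$ is the number of positive long coroots of $R\check{}$ of special height $2j$, i.e. $\mu_j=h_{R_s^+\check{}}(2j)$ --- a genuine partition because that sequence is non-increasing --- and for any partition the transpose turns the $m$-th successive difference of parts into the multiplicity of $m$, so $\#\{i:h_i=m\}=\mu_m-\mu_{m+1}=e_m$, as wanted. Concretely this reduces to reading the multiplicity sequences $(\mu_j)$ off Section~3: $(1,\dots,1)$ ($r$ ones) in type $B_r$, transpose $\{r\}$; $(2,2,2,2,1,1,1,1)$ in type $F_4$, transpose $\{4,8\}$; and in type $C_r$ the last two columns of Table~\ref{(r_k,a)CrZ}, transpose $\{2i\}_{i=1}^{r-1}$. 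The $C_r$ case is the only point requiring a little care, because the entries of Table~\ref{(r_k,a)CrZ} depend on the parity of $r$, so one must check, uniformly in that parity, both that the sequence is non-increasing and that its transpose is $\{2i\}_{i=1}^{r-1}$; this is the main (though still routine) obstacle.

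For type $G_2$ the parity argument fails, since by Table~\ref{h} the special heights of $\alpha_1\check{}$, $(\alpha_1+\alpha_2)\check{}$, $(2\alpha_1+\alpha_2)\check{}$ are $2,5,7$. Here I would substitute Table~\ref{h} directly into \eqref{g}: the nonzero factors are $(1-q)\cdot(1-q)^{-1}\cdot(1-q^3)\cdot(1-q^4)^{-1}\cdot(1-q^8)$, and $\tfrac{1-q^8}{1-q^4}=1+q^4$ gives $GM_0(-q)=(1-q^3)(1+q^4)$, hence $GM_0(q)=(1+q^3)(1+q^4)$. Since $r_s=1$ and $h_1=3$ this is precisely $(1+q^{h_1})(1+q^{h_1+1})$, completing the proof.
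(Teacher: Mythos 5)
Your argument is correct and follows essentially the same route as the paper: everything rests on \eqref{g} together with the same case-by-case special-height data (the $B_r$ and $C_r$ coroot lists, Table~\ref{(r_k,a)CrZ}, and Tables~\ref{n} and \ref{h}), and your partition-conjugate packaging only makes explicit what the paper's definition of the special exponents as a dual partition already encodes. The one step you defer --- verifying in type $C_r$, uniformly in the parity of $r$, that the height multiplicities form the partition $(r-1,r-1,r-2,r-2,\dots,1,1)$ whose conjugate is $\{2i\}_{i=1}^{r-1}$ --- is exactly the paper's derivation of \eqref{i} and \eqref{z6} from Table~\ref{(r_k,a)CrZ}, so it is routine but should be written out.
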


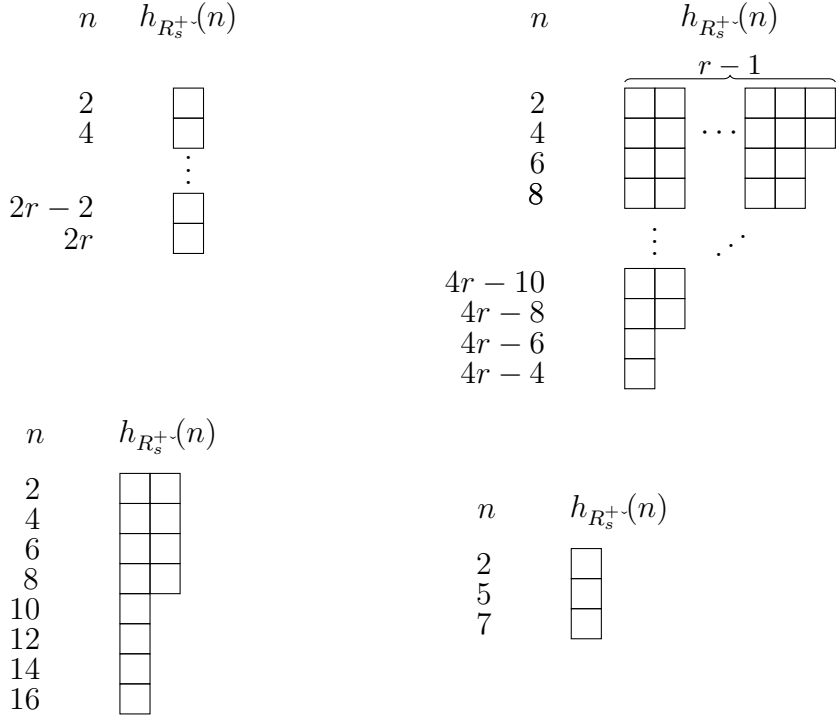
\begin{figure}[htb]
\begin{tikzpicture}
 [inner sep=2mm,
place/.style={rectangle,draw}]
\node (a1) at ( 0,2.0) [place] {};
\node [left] at (a1.west) {$2\quad\quad$};
\node (b1) at ( 0,1.6) [place] {};
\node [left] at (b1.west) {$4\quad\quad$};
\node (c1) at ( 0,0.6) [place] {};
\node [left] at (c1.west) {$2r-2\quad\quad$};
\node (d1) at ( 0,0.2) [place] {};
\node [left] at (d1.west) {$2r\quad\quad$};
\node [below] at (0,1.75) {$\vdots$};
\node (e1)  at (0,3.1) {$h_{R_s^+\,\check{}}(n)$};
\node (f1) [left] at (e1.west) {$n\,\,$};

\node (a) at ( 6,2.0) [place] {};
\node (g) at ( 6.4,2.0) [place] {};
\node (e) [left] at (a.west) {$2\quad\quad$};
\node (b) at ( 6,1.6) [place] {};
\node (h) at ( 6.4,1.6) [place] {};
\node [left] at (b.west) {$4\quad\quad$};
\node (i) at ( 6,1.2) [place] {};
\node (j) at ( 6.4,1.2) [place] {};
\node (k) [left] at (i.west) {$6\quad\quad$};
\node (l) at ( 6,0.8) [place] {};
\node (m) at ( 6.4,0.8) [place] {};
\node [left] at (m.west) {$8\quad\quad\quad$};
\node [right] at (6.6,1.6) {$\cdots$};

\node (p) at ( 7.6,2.0) [place] {};
\node (q) at ( 8,2.0) [place] {};
\node (x) at ( 8.4,2.0) [place] {};
\node (y) at ( 8.4,1.6) [place] {};
\node (r) at ( 7.6,1.6) [place] {};
\node (s) at ( 8,1.6) [place] {};
\node (t) at ( 7.6,1.2) [place] {};
\node (u) at ( 8,1.2) [place] {};
\node (v) at ( 7.6,0.8) [place] {};
\node (w) at ( 8,0.8) [place] {};
\node [left] at (m.west) {$8\quad\quad\quad$};
\node [below] at (6.2,0.8) {$\vdots$};
\node [below] at (7.2,0.75) {$\iddots$};

\node (c) at ( 6,-0.4) [place] {};
\node (z) at ( 6.4,-0.4) [place] {};
\node [left] at (c.west) {$4r-10\quad\quad$};
\node (d) at ( 6,-0.8) [place] {};
\node (aa) at ( 6.4,-0.8) [place] {};
\node [left] at (d.west) {$4r-8\quad\quad$};
\node (n) at ( 6,-1.2) [place] {};
\node [left] at (n.west) {$4r-6\quad\quad$};
\node (o) at ( 6,-1.6) [place] {};
\node [left] at (o.west) {$4r-4\quad\quad$};
\node (e)  at (7.2,3.1) {$h_{R_s^+\,\check{}}(n)$};
\node (f) [left] at (e.west) {$n\qquad\,\quad$};
\node [font=\small] at (7.2,2.5) {$r-1$};

\draw [decorate,decoration={brace},xshift=0pt,yshift=0pt] (5.8,2.28) -- (8.6,2.28);
\end{tikzpicture}

\begin{tikzpicture}
 [inner sep=2mm,
place/.style={rectangle,draw}]
\node (a) at ( 0,2.0) [place] {};
\node (g) at ( 0.4,2.0) [place] {};
\node (e) [left] at (a.west) {$2\quad\quad$};
\node (b) at ( 0,1.6) [place] {};
\node (h) at ( 0.4,1.6) [place] {};
\node [left] at (b.west) {$4\quad\quad$};
\node (i) at ( 0,1.2) [place] {};
\node (j) at ( 0.4,1.2) [place] {};
\node (k) [left] at (i.west) {$6\quad\quad$};
\node (l) at ( 0,0.8) [place] {};
\node (m) at ( 0.4,0.8) [place] {};
\node [left] at (m.west) {$8\quad\quad\quad$};
\node (c) at ( 0,0.4) [place] {};
\node [left] at (c.west) {$10\quad\quad$};
\node (d) at ( 0,0.0) [place] {};
\node [left] at (d.west) {$12\quad\quad$};
\node (n) at ( 0,-0.4) [place] {};
\node [left] at (n.west) {$14\quad\quad$};
\node (o) at ( 0,-0.8) [place] {};
\node [left] at (o.west) {$16\quad\quad$};
\node (e)  at (0.4,2.7) {$\,h_{R_s^+\,\check{}}(n)$};
\node (f) [left] at (e.west) {$n\,\quad$};

\node (m2)  at (6.4,1.7) {$\,h_{R_s^+\,\check{}}(n)$};
\node (n2) [left] at (m2.west) {$n\,\quad$};
\node (a2) at ( 6,1.0) [place] {};
\node (e2) [left] at (a2.west) {$2\quad\quad$};
\node (b2) at ( 6,0.6) [place] {};
\node [left] at (b2.west) {$5\quad\quad$};
\node (i2) at ( 6,0.2) [place] {};
\node (k2) [left] at (i2.west) {$7\quad\quad$};

\end{tikzpicture} 
\caption{The partition formed by the coroots in $\,R_s^+\,\check{}\,$ with respect to their special heights $\,n$ when $R$ is of type  $\,B_r\,$ (top left), $\,C_r\,$ (top right), $\,F_4\,$ (bottom left) and $\,G_2\,$ (bottom right).}\label{z12}
\end{figure}

\begin{remark}
By Theorem~\text{\ref{T1}} and \text{\cite[Table 1]{5Pan}}, when $\,R\,$ is of type $\,B, C\,$ or $\,F\,$ there exists a graded subspace $\,T=\langle T_1,\cdots,T_{r_s}\rangle \subset (\bigwedge V_{\theta_s})^{\mathfrak{g}},\,$ such that each $\,T_i\,$ is a primitive generator of degree $\,2h_i+1\,$ in $\,\bigwedge V_{\theta_s}\,$ and $\,(\bigwedge V_{\theta_s})^{\mathfrak{g}}\,$ is an exterior algebra over $\,T.\,$
 \end{remark}

\section{The graded multiplicity of \texorpdfstring{$\,V_{\theta_s}\,$}{Lg}}\label{p}
    In this section we use the action of the operator $\,Y^{\theta{}\,\check{}}\,$ from the double affine Hecke algebra $\,\mathcal{H}\!\!\!\mathcal{H}_{q,\,t_{s,\,l}}\,$ on a subset of the group algebra $\,\mathbb{Q}_{q,\,t_{s,\,l}}[\mathcal{P}],\,$ some properties of a subset of $\,R$ associated with $\,Y^{\theta{}\,\check{}}\,$ and the unitary property of $\,Y^{\theta{}\,\check{}}\,$ with respect to Cherednik's inner product on $\,\mathbb{Q}_{q,\,t_{s,\,l}}[\mathcal{P}]\,$ to find$\,GM_{\theta_s}(q)\,$ in $\,\bigwedge V_{\theta_s}.\,$ We set $\,t_{\alpha}=q^{\frac{-k(\alpha)}{2}}\,$ (see \text{\cite[Sect. 4]{17Mac}} and \text{\cite{3Baz}}) for $\,\alpha\in R.\,$
\par The following subsection is from \text{\cite{15Che,12Che}}, but we maintain $\,(k_s\,,k_l)=(2\,,1)\,$ for our unique integer label $\,k(\alpha)\,$ on the different $W$-orbits of $\,R. \,$

\subsection{Double affine Hecke algebra \texorpdfstring{$\,\mathcal{H}\!\!\!\mathcal{H}_{q,\,t_{s,\,l}}\,$}{Lg}}
Let $\,\hat{W}=\Pi \ltimes W^{a} \,$ be the extended affine Weyl group, where $\,W^{a}\,$ is the affine Weyl group and $\,\Pi\,$ is the subgroup of $\,\hat{W}\,$ which leaves the affine Dynkin diagram invariant. The group $\,\hat{W}\,$ is isomorphic to $\,W\ltimes \tau(\mathcal{P}\,\check{}\,),$ where the subgroup $\,\tau(\mathcal{P}\,\check{}\,)$ are translations of the Euclidean space $\,E\,$ associated with the root system $\,R\,$ of the Lie algebra $\,\mathfrak{g}\,$ by coweights. Let the double affine Hecke algebra $\,\mathcal{H}\!\!\!\mathcal{H}_{q,\,t_{s,\,l}}\,$ be the quotient of the field $\,\mathbb{Q}_{q,\,t_{s,\,l}}\,$ by elements $\,\{T_i\}_{i=0}^r,\,$ $\,\{e^{\omega_i}\}_{i=1}^r\,$ and the group $\,\Pi,\,$ modulo the relation $\,(T_i-t_i)(T_i+t_i^{-1})=0,\, 0\leq i \leq r,$ where $\,T_i,\, e^{\omega_i}\,$ and $\,\Pi\,$ generate and satisfy the relations on the double affine braid group $\,\mathscr{B}\,$ associated with $\,\mathcal{H}\!\!\!\mathcal{H}_{q,\,t_{s,\,l}}.\,$
\par Let elements $\,Y\,$ be pairwise commuting elements contained in the al\-gebra $\,\mathcal{H}\!\!\!\mathcal{H}_{q,\,t_{s,\,l}}\,$ such that \begin{equation}\label{z20}
    Y^{\lambda}=T(\tau(\lambda)),
\end{equation}
 where $\,\tau(\lambda)\in\hat{W}\,$ is the translation of $\,E\,$ by $\,\lambda\in \mathcal{P}\,\check{}.\,$ Then every element $\,H\in\mathcal{H}\!\!\!\mathcal{H}_{q,\,t_{s,\,l}}\,$ can be uniquely written as
\begin{equation*}
H=\sum_{w\in W} h_wT_wf_w,
\end{equation*}
where $\,h_w\in\mathbb{Q}_{q,\,t_{s,\,l}}[\mathcal{P}]\,$ and $\,f_w\in\mathbb{Q}_{q,\,t_{s,\,l}}[Y].\,$ Each $\,h_w\in\mathbb{Q}_{q,\,t_{s,\,l}}[\mathcal{P}]\,$ acts naturally on $\,\mathbb{Q}_{q,\,t_{s,\,l}}[\mathcal{P}]\,$ while $\,T_w=\prod_{i=1}^{p}T_{j_i}\,$ when $\,w\,$ has a reduced decomposition $s_{j_i}\cdots s_{j_p},\, 1\leq j_i\leq r,\,$ each $\,T_j\,$ acts on $\,\mathbb{Q}_{q,\,t_{s,\,l}}[\mathcal{P}]\,$ via the Demazure-Lusztig operator as
    \begin{equation*}
   T_j =t_{\alpha_j}s_j+(t_{\alpha_j}-t_{\alpha_j}^{-1})\frac{(1-s_{{\alpha_j}})}{1-e^{\alpha_j}},
\end{equation*}
and $\,Y\,$ acts on $\,\mathbb{Q}_{q,\,t_{s,\,l}}[\mathcal{P}]\,$ via \text{\eqref{z20}}.
\par Let $\,\theta\,$ be the highest root of $\,R.$ In what follows we consider the reduced decomposition of $\,\tau(\theta{}\,\check{})\in\hat{W}\,$ in order to determine the action of the operator  $\,Y^{\theta{}\,\check{}}\,$ on $\,\mathbb{Q}_{q,\,t_{s,\,l}}[\mathcal{P}].\,$

\subsection{Operator \texorpdfstring{$\,Y^{\theta\,\check{}}$}{Lg}}
Let $\,\hat{\alpha}=\alpha+n\delta\,$ be an affine root in the affine root system $\hat{R},\,$ where $\,\alpha\in R\,$ and $\,\delta\,$ is the constant function $\,1\,$ on $\,E.\,$ Let $\,G_{\hat{\alpha}}\,$ be the operator defined as
\begin{equation}\label{q}
   G_{\hat{\alpha}} =t_{\alpha}+(t_{\alpha}-t_{\alpha}^{-1})\frac{(s_{\hat{\alpha}}-1)}{1-q^{-n}e^{\alpha}},
\end{equation}
see \text{\cite[(2.17)]{15Che}} and \text{\cite[(2.3)]{3Baz}}.
Let $\,\hat{w}=\pi w \in \hat{W},\,$ where $\,\pi \in \Pi\,$ and $\,s_{j_p}\cdots s_{j_1},\,$ $0\leq j_i\leq r,\,$ is a reduced decomposition for $\,w\in W^{a}.\,$ Then by $\,(2.18)\,$ in \text{\cite{15Che}}
\begin{equation}\label{z19}
  T(\hat{w})=\hat{w}G_{\alpha^{(p)}}\cdots G_{\alpha^{(1)}},
\end{equation}
where $\,\alpha^{(1)},\cdots,\alpha^{(p)}\,$ is the chain of positive affine roots made negative by $\,\hat{w}\,$ and $\,\alpha^{(i)} = s_{j_{1}}\cdots s_{j_{i-1}}\alpha_{j_i}.\,$
\par The following is from \text{\cite[Sect. 3]{3Baz}}. We can write a reduced decomposition for $\,\tau(\theta{}\,\check{}\,)\in\hat{W}\,$ as
\begin{equation*}
    \tau(\theta{}\,\check{}\,)=s_{-\theta+\delta}s_{\theta}=s_0s_{j_p}\cdots s_{j_1}s_{j_0}s_{j_{-1}}\cdots s_{j_{-p}},\qquad 1\leq j_i \leq r,
\end{equation*}
such that $\,j_i=j_{-i},\, -p\leq i\leq p,\,$ where $\,\alpha_{0}=-\theta+\delta\,$ is the zeroth-simple root of $\,\hat{R}.\,$ Let $\,\hat{R}(\hat{w})\,$ be the set of positive affine root made negative by $\,\hat{w}.\,$ Then  $\,\hat{R}(\tau(\theta{}\,\check{}\,))=R(s_\theta) \cup \{\theta+\delta\},\,$
where $\,R(s_\theta)=\{\alpha \in R^+ \mid (\alpha\,\,,\,\theta{}\,\check{}\,)>0\}.\,$ Let
\begin{equation*}
    \alpha^{(-p)},\cdots,\alpha^{(0)},\cdots,\alpha^{(p)}, \alpha^{(p+1)}
\end{equation*}
be the chain of affine roots in $\,\hat{R}(\tau(\theta{}\,\check{}\,)),\,$ where $\,\alpha^{(i)} = s_{j_{-p}}\cdots s_{j_{i-1}}\alpha_{j_i},\, -p\leq i\leq p\,\text{ and }\, \alpha^{(p+1)}=\theta+\delta.\,$ Note that $\,
\alpha^{(-i)} = -s_{\theta}\alpha^{(i)}, \,\,  -p\leq i\leq p.\,$ This implies a symmetry in the lengths of roots about $\,\alpha^{(0)}\,$ in the chain of roots $\, \alpha^{(-p)},\cdots,\alpha^{(p)}\,$ in $\,\hat{R}(\tau(\theta{}\,\check{}\,)).\,$
By \text{\eqref{z20}} and \text{\eqref{z19}} therefore,
\begin{equation}\label{z21}
    Y^{\theta{}\,\check{}\,}=\tau(\theta{}\,\check{}\,)G_{\theta+\delta}G_{\alpha^{(p)}}\cdots G_{\alpha^{(-p)}}.
\end{equation}
\par We will use the action of $\,Y^{\theta{}\,\check{}}\,$ from \text{\eqref{z21}} on $\,e^0\,$ and $\,e^{\theta_s}\in \mathcal{Q}[q^{\pm 1},t_{s,\,l}^{\pm 1}][\mathcal{P}]\,$ to deduce $\,GM_{\theta_s}(q)\,$ in $\bigwedge V_{\theta_s}\,$ from \text{\eqref{a}}.

\subsubsection{The action of \texorpdfstring{$\,Y^{\theta{}\,\check{}}\,$}{Lg}}
\begin{proposition}\label{P1}
Let $\,R\,$ be of type $B, C \text{ or } F.\,$ The following holds for the action of $\,Y^{\theta{}\,\check{}}\,$ on $\,e^0\,$ and $\,e^{\theta_s}\,$ in terms of the double parameter $\,t_{s,\,l}.\,$
    \begin{align}\label{Ye}
    Y^{\theta{}\,\check{}} e^0=&\,t_l^{2L} t_s^S e^0,\nonumber\\
    Y^{\theta{}\,\check{}} e^{\theta_s}=&\, qt_l^{-2}t_s^{-S+2} e^{\theta_s}-(t_s-t_s^{-1})t_l^{L-1}t_s^{-S+3} e^0.
    \end{align}
where $\,L=ht_l\,\theta,\, S=ht_s\,\theta\, \text{ and }\, t_l \,\text{ (respectively } t_s) = t_{\alpha} \,$ when $\,\alpha \in R_l \,$ (respectively $\,R_s).\,$
\end{proposition}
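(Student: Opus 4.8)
The plan is to apply the explicit factorisation $Y^{\theta\check{}} = \tau(\theta\check{})\,G_{\theta+\delta}\,G_{\alpha^{(p)}}\cdots G_{\alpha^{(-p)}}$ from \eqref{z21} directly to the two basis elements $e^0$ and $e^{\theta_s}$, processing the chain of $G$-operators one at a time from the right. For the computation on $e^0$, each operator $G_{\hat\alpha}$ in \eqref{q} acts on the constant $e^0$ by $G_{\hat\alpha}e^0 = (t_\alpha + (t_\alpha - t_\alpha^{-1})\tfrac{s_{\hat\alpha}-1}{1-q^{-n}e^\alpha})e^0 = t_\alpha e^0$, since $s_{\hat\alpha}e^0 = e^0$ kills the second term. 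So $G_{\theta+\delta}G_{\alpha^{(p)}}\cdots G_{\alpha^{(-p)}}e^0 = \big(\prod t_{\alpha^{(i)}}\big)\,t_\theta\, e^0$, and then $\tau(\theta\check{})e^0 = e^0$. The product of the $t_{\alpha^{(i)}}$ over the chain $\alpha^{(-p)},\dots,\alpha^{(p)},\theta+\delta$ must be identified: because $\hat R(\tau(\theta\check{})) = R(s_\theta)\cup\{\theta+\delta\}$ and $R(s_\theta) = \{\alpha\in R^+ : (\alpha,\theta\check{})>0\}$, and $\theta$ itself is long, counting how many of these roots are long versus short gives the exponents $2L$ and $S$ where $L = ht_l\,\theta$, $S = ht_s\,\theta$. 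The key combinatorial fact I would invoke is that the multiset of lengths in $R(s_\theta)\cup\{\theta\}$ contributes $t_l$ with total multiplicity $2L$ (each long root twice, reflecting the length-two symmetry $\alpha^{(-i)} = -s_\theta\alpha^{(i)}$ together with the extra $\theta+\delta$) and $t_s$ with multiplicity $S$; this is exactly the kind of height-counting already used for $GM_0$ in Section 3, and it also matches the fact that $Y^{\theta\check{}}$ acts on the trivial submodule by the scalar $q^{(\rho_k,\theta\check{})/\text{something}}$ up to the $t$-substitution — I would cross-check the exponent against $W(q^k)$-type identities.

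For the action on $e^{\theta_s}$, the computation is more delicate because $s_{\hat\alpha}e^{\theta_s}\neq e^{\theta_s}$ in general, so the second term of each $G_{\hat\alpha}$ genuinely contributes. Here I would follow Bazlov's method in \cite{3Baz} closely: track how the "correction terms" $(t_\alpha-t_\alpha^{-1})\tfrac{s_{\hat\alpha}-1}{1-q^{-n}e^\alpha}$ propagate. The crucial structural input is that $\theta_s$ is a \emph{short} dominant weight and the reflections $s_{\alpha^{(i)}}$ in the chain either fix $\theta_s$ or move it by a short root; the only place a nonzero denominator-clearing happens is where $(\theta_s,\alpha\check{})\neq 0$, i.e. where $\alpha$ is short. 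One shows inductively that after applying $G_{\alpha^{(-p)}},\dots,G_{\alpha^{(p)}}$ the element $e^{\theta_s}$ is sent to a $\mathbb{Q}_{q,t_{s,l}}$-linear combination of $e^{w\theta_s}$ for $w$ in the relevant parabolic, but that all the "intermediate" weights cancel or telescope, leaving only $e^{\theta_s}$ and $e^0$ (the latter appears precisely because $\theta_s - \theta_s = 0$ can be reached, and $-\theta_s + (\text{something}) = 0$ along the chain since $\theta_s$ is a root). Then $G_{\theta+\delta}$ and the final translation $\tau(\theta\check{})$ — which multiplies $e^\mu$ by $q^{(\mu,\theta\check{})}$ times $e^{\text{(translate of }\mu)}$ appropriately — produce the overall factor $q$ on the $e^{\theta_s}$ term (from $(\theta_s,\theta\check{}) = 1$, as $\theta_s$ is short and $\theta$ long) and the $t$-powers recording the lengths encountered.

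The main obstacle will be establishing the precise $t_{s,l}$-exponents in the second line of \eqref{Ye}, namely that the coefficient of $e^{\theta_s}$ is $q\,t_l^{-2}t_s^{-S+2}$ and the coefficient of $e^0$ is $-(t_s-t_s^{-1})t_l^{L-1}t_s^{-S+3}$. This requires bookkeeping of which $G_{\alpha^{(i)}}$ act "trivially" (contributing a clean $t_\alpha$) versus "nontrivially" (contributing the $s_{\hat\alpha}-1$ piece), and carefully pairing up the short/long roots in the chain using the symmetry $\alpha^{(-i)} = -s_\theta\alpha^{(i)}$. I expect to verify the final formula against the two independent checks: (i) unitarity of $Y^{\theta\check{}}$ with respect to Cherednik's inner product, which constrains the scalar on $e^0$; and (ii) the eventual consistency of the resulting $GM_{\theta_s}(q)$ with the degree $\dim V_{\theta_s} = r_s + \#R_s$ and with the type-by-type tables of Section 3. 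A full case-free argument seems possible along these lines, but if the general chain analysis proves unwieldy I would fall back to verifying the proposition type by type ($B_r$, $C_r$, $F_4$) using the explicit reduced words for $\tau(\theta\check{})$, which is routine if tedious.
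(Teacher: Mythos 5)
Your overall strategy is the paper's: expand $Y^{\theta\check{}}$ via \eqref{z21} and push $e^0$ and $e^{\theta_s}$ through the chain of operators $G_{\alpha^{(k)}}$, and your treatment of $e^0$ (each $G$ acts by $t_\alpha$, and the chain contains $2L$ long and $S$ short members once $\theta+\delta$ is included) agrees with the paper. But for $e^{\theta_s}$ there is a genuine gap, in two respects. First, the mechanism you describe --- that the chain produces a combination of $e^{w\theta_s}$ over a parabolic orbit whose intermediate terms ``cancel or telescope'' --- is not what happens, and you give no argument for any such cancellation. The actual point, which the paper extracts from \eqref{Gemu} together with $(\theta_s\,,\alpha\,\check{}\,)\in\{0,\pm1\}$ \eqref{Lemma2}, is that no intermediate weights appear at all: for every $k$ with $\alpha^{(k)}\neq\theta_s$ the operator $G_{\alpha^{(k)}}$ acts on $e^{\theta_s}$ by the scalar $t_{\alpha^{(k)}}^{\varepsilon_{i,k}}$ (the correction sum in \eqref{Gemu} is empty), and only at the single index $i$ with $\alpha^{(i)}=\theta_s$ --- which exists by Bazlov's Lemma~4(f) --- does the one correction $-h_s e^0$ arise, after which the $e^0$ strand is merely multiplied by $t$'s; this is \eqref{Rf}. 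Relatedly, your claim that a correction occurs wherever $(\theta_s\,,\alpha\,\check{}\,)\neq 0$ is inaccurate: when $(\theta_s\,,\alpha\,\check{}\,)=+1$ with $\alpha\neq\theta_s$ nothing extra arises.

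Second, the content of the proposition is precisely the $t_{s,\,l}$-exponents, and your proposal defers exactly this ``bookkeeping'' without supplying the combinatorial inputs that make it possible. The paper evaluates the three products in \eqref{Rf} using: the symmetry $\alpha^{(-i)}=-s_\theta\alpha^{(i)}$ and the sign relations of Bazlov's Lemma~4(a)--(c) recorded in \eqref{ei}, giving $\prod_k t_{\alpha^{(k)}}^{\varepsilon_{i,k}}=t_s^{-S+2}t_l^{-1}$ as in \eqref{Re}; the fact that $\theta_s$ is the highest short root in the chain, so every $\alpha^{(k)}$ with $k>i$ is long, giving \eqref{Rd}; the sum formula $\sum_{k=-p}^{-i-1}\varepsilon_{i,k}=p+i-S-L+1$ coming from Lemma~4(f), which yields \eqref{Rc}; the expression of $\#R(s_\theta)=2p+1$ in terms of $L$ and $S$; and finally $\tau(\theta\check{}\,)G_{\theta+\delta}e^{\theta_s}=qt_l^{-1}e^{\theta_s}$ and $\tau(\theta\check{}\,)G_{\theta+\delta}e^{0}=t_l e^{0}$ via \eqref{Rj}. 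None of these steps appear in your outline, so the exponents $qt_l^{-2}t_s^{-S+2}$ and $t_l^{L-1}t_s^{-S+3}$ are never established; unitarity and degree checks constrain but do not determine them. Your fallback of verifying the statement type by type also does not close the gap as stated, since $B_r$ and $C_r$ are infinite families: a case check still requires a uniform computation with the explicit chain in each family, which is essentially the bookkeeping above.
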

\begin{remark}
The case of the double parameter $\,t_{s,\,l}\,$ for $\,t\,$ in the action of $\,Y^{\theta{}\,\check{}}\,$ in Proposition \text{\ref{P1}} provides a non-trivial and intricate extension of Bazlov's result in \text{\cite[Sect. 3]{3Baz}}. If we take $\,t_s=t_l=t,\,$ then \text{\eqref{Ye}} reduces to the weaker results obtained in $\,(11)\,$ and $\,(12)\,$ in \text{\cite[Sect. 3]{3Baz}}. 
\end{remark}
We will prove Proposition \text{\ref{P1}} using some properties of the roots in  $\,\hat{R}(\tau(\theta\,\check{}\,))\,$ to determine the actions of the operators $\,G_{\alpha}\,$ in \text{\eqref{z21}} on $\,e^{0}\,$ and $\,e^{\theta_s}.\,$

\subsubsection{Proof of Proposition \text{\ref{P1}}}
\begin{proof}
Following \text{\cite{3Baz}}, but using the double parameter $\,t_{s,\,l}\,$ instead of $t,\,$ we introduce another formula for $\,G_{\alpha}.\,$ Let
\begin{align}
h_{\alpha} &= t_{\alpha}-t_{\alpha}^{-1},\label{hi}\\
\varepsilon(\alpha\, , \, \beta) &=
\begin{cases}
-1, & (\alpha\, , \, \beta) > 0,\\
+1, & (\alpha\, , \, \beta) \leq 0,
\end{cases}
\qquad \alpha, \beta \in E.\nonumber
\end{align} 
We denote $\,\varepsilon(\alpha^{(i)}\, ,\,\alpha^{(j)\,\check{}}\,)\,$ by $\,\varepsilon_{i, \,j},\,\,\alpha^{(i)},\,\alpha^{(j)}\in R(s_{\theta}).\,$ Let $\, \alpha \in R,\, \mu \in \mathcal{P}.\,$ Then 
\begin{equation}\label{Gemu}
G_{\alpha} e^{\mu} = t_{\alpha}^{\varepsilon}\,e^{\mu} + \varepsilon\cdot h_{\alpha} \sum_{i=1}^{|(\mu\, , \, \alpha\,\check{}\,)|+ \frac{\varepsilon-1}{2}} e^{\mu+i\varepsilon \alpha}, \qquad \varepsilon = \varepsilon (\alpha\, , \, \beta).
\end{equation}
In particular
\begin{align}\label{Ge}
      G_{\alpha^{(k)}}\,e^{\alpha^{(i)}}&=t_{\alpha^{(k)}}^{\varepsilon_{k,i}}\,e^{\alpha^{(i)}}-\delta_{k,i\,}h_{\alpha^{(k)}}\,e^0, \qquad \alpha^{(i)} \in R_s(\tau(\theta\,\check{})),\nonumber\\
      G_{\alpha^{(k)}}\,e^0&=t_{\alpha^{(k)}}\,e^0.
\end{align}
See \text{\cite{3Baz}}. 
\par By Lemma~$1$ in \text{\cite{3Baz}} and \text{\eqref{Ge}} $\,G_{\theta+\delta}G_{\alpha^{(p)}}\cdots G_{\alpha^{(-p)}}e^0=t_l^{2L}t_s^{S}e^0.\,$ We apply $\,\tau(\theta{}\,\check{}\,),\,$ which is identity on $\,\mathcal{Q}[q^{\pm 1}, t_{s,\,l}^{\pm 1}]e^0,\,$ to this and obtain $\,Y^{\theta{}\,\check{}} e^0\,$ in \text{\eqref{Ye}}.
To calculate $\,Y^{\theta{}\,\check{}\,}e^{\theta_s}\,$ we first compute $\,G_{\alpha^{(p)}}\cdots G_{\alpha^{(-p)}}e^{\theta_s}\,$ and then apply $\,\tau(\theta{}\,\check{}\,)G_{\theta+\delta}\,$ to the result. By Lemma~$4\,(f)\,$ in \text{\cite{3Baz}} there exists an index $i\,$ such that $\,\theta_s = \alpha^{(i)} \in R(s_{\theta}).\,$ We fix this $\,i.\,$ Using \text{\eqref{Ge}} we obtain 
\begin{equation}\label{Rf}
    G_{\alpha^{(p)}}\cdots G_{\alpha^{(-p)}}e^{\alpha^{(i)}}=\prod_{k=-p}^p t_{\alpha^{(k)}}^{\varepsilon_{i, \,k}}\,e^{\alpha^{(i)}}-h_s\prod_{k=i+1}^p t_{\alpha^{(k)}}\prod_{k=-p}^{i-1} t_{\alpha^{(k)}}^{\varepsilon_{i, \,k}}\,e^0.
\end{equation}
We will simplify each of $\,\prod_{k=-p}^p t_{\alpha^{(k)}}^{\varepsilon_{i, \,k}},\,$ $\,\prod_{k=i+1}^p t_{\alpha^{(k)}}\,$ and $\,\prod_{k=-p}^{i-1} t_{\alpha^{(k)}}^{\varepsilon_{i, \,k}}\,$ to obtain $\,G_{\alpha^{(p)}}\cdots G_{\alpha^{(-p)}}e^{\theta_s},\,$ in terms of the double parameter $\,t_{s,\,l},\,$ in $\,\mathbb{Q}_{q,\,t_{s,\,l}}[\mathcal{P}].\,$ By Lemma~$4\,(b),(c)\,$ in \text{\cite{3Baz}} we have
\begin{align}\label{ei}
    \varepsilon_{i, \,k}&=-1 \qquad \text{ \,\,when }\,\, \alpha^{(i)},\, \alpha^{(k)}\in R_s,\, k\neq -i,\nonumber \\
    \varepsilon_{i,-i}&=1 \qquad \quad \,\text{ when }\,\, \alpha^{(i)}\in R_s,\nonumber\\ 
    t_{\alpha^{(k)}}^{\varepsilon_{i, \,k}}\,t_{\alpha^{(k)}}^{\varepsilon_{i,-k}}&=1 \qquad \quad \text{ when }\,\, \alpha^{(i)}\in R_s,\,\alpha^{(k)}\in R_l,\, k\neq 0.
\end{align}
Let $\,\alpha \in R,\,$ then \begin{equation}\label{Lemma2}
    (\theta_s\,\,,\alpha\,\check{}\,)\in \{0,\pm 1\},
\end{equation}
see \text{\cite[Chap.VI, Sect. 1.3]{14Bou}}. Hence, using Lemma~$4\,(a)\,$  and Lemma~$1\,$ in \text{\cite{3Baz}}
\begin{equation}\label{Re}
    \prod_{k=-p}^p t_{\alpha^{(k)}}^{\varepsilon_{i, \,k}}= t_{\alpha^{(-i)}}\,t_{\alpha^{(0)}}^{\varepsilon_{i,0}}\,\times\prod_{\alpha^{(k)}\in R_s \backslash \{\alpha^{(i)}\}} t_{\alpha^{(k)}}^{-1}=t_s^{-S+2\,}t_l^{-1}.
\end{equation}
Since $\, \theta_s\,$ is the highest short root in the sequence of short roots in the chain $\, \alpha^{(-p)},\cdots,\alpha^{(p)}\,$ (Lemma~$4\,(e)\,$ in \text{\cite{3Baz}}), it follows that the set of roots $\,\{\alpha^{(j)}\, |\, i<j\}\,$ in the chain $\, \alpha^{(-p)},\cdots,\alpha^{(p)},\,$ where $\, \theta_s=\alpha^{(i)},\,$ are all long roots. Therefore
\begin{equation}\label{Rd}
    \prod_{k=i+1}^p t_{\alpha^{(k)}}=t_l^{p-i}.
\end{equation}
By the symmetry in the lengths of roots about $\, \alpha^{(0)}\,$ and since all the short roots in $\,R(s_{\theta})\,$ lie between $\,\alpha^{(-i)}\,$ and $\,\alpha^{(i)},\,$ combining \text{\eqref{Lemma2}} and \text{\eqref{ei}} gives 
\begin{align}
    \prod_{k=-p}^{i-1} t_{\alpha^{(k)}}^{\varepsilon_{i, \,k}}&=t_s^{\varepsilon_{i,-i}}\,t_l^{\varepsilon_{i,0}}\prod_{k=-p}^{-i-1} t_l^{\varepsilon_{i, \,k}}\,\times \prod_{\alpha^{(k)} \in R_s^+(\theta)\backslash\{\alpha^{(\pm i)}\}} t_s^{\varepsilon_{i, \,k}} \,\times \prod_{\substack{k=-i+1\\k\neq 0}}^{i-1} t_l^{\varepsilon_{i, \,k}}\nonumber\\
    &=t_l^{-1}t_s^{-S+3}\prod_{k=-p}^{-i-1} t_l^{\varepsilon_{i, \,k}}.\label{Ra}
\end{align}
From (21) in \text{\cite{3Baz}} and \text{\eqref{Ra}} above
\begin{align*}
    \Big(\sum_{k=-p}^{i-1} \varepsilon_{i, \,k}\Big)&=\Big(\sum_{k=-p}^{-i-1} \varepsilon_{i, \,k}\Big)-S+2\\
    &=p+i+2-2ht\,\alpha^{(i)}.
\end{align*}
By Lemma~$4\,(f)\,$  in \text{\cite{3Baz}} therefore,
\begin{equation*}
    \sum_{k=-p}^{-i-1} \varepsilon_{i, \,k}=p+i-S-L+1.
\end{equation*}
Hence,
\begin{equation}\label{Rc}
    \prod_{k=-p}^{i-1} t_{\alpha^{(k)}}^{\varepsilon_{i, \,k}}=t_l^{p+i-L-S}t_s^{-S+3}.
\end{equation}
By Lemma~$1$ in \text{\cite{3Baz}} $\,R(s_{\theta})=2p+1=2L+S+1.\,$
Combining \text{\eqref{Rf}}, \text{\eqref{Re}}, \text{\eqref{Rd}} and \text{\eqref{Rc}} therefore gives
\begin{equation}\label{Rg}
    G_{\alpha^{(p)}}\cdots G_{\alpha^{(-p)}}e^{\theta_s}=t_l^{-1}t_s^{-S+2}\,e^{\theta_s}-h_s\,t_l^{L-2}t_s^{-S+3}\,e^0.
\end{equation}
Let $\,w\tau(\lambda)\in\hat{W},\,$ where $\,w\in W\,$ and $\,\lambda \in \mathcal{P}\,\check{}.\,$ Then
\begin{equation}\label{Rj}
    w\tau(\lambda)(e^y)=q^{(\lambda\,\,,y)}e^{w(y)},
\end{equation}
see \text{\cite[Sect. 2.3]{3Baz}}. By \text{\eqref{q}}, \text{\eqref{hi}} and \text{\eqref{Rj}} $\,\tau(\theta{}\,\check{}\,) \,G_{\theta+\delta}(e^{\theta_s})=qt_l^{-1}\,e^{\theta_s}\,$ and 
$\,\tau(\theta{}\,\check{}\,)\cdot G_{\theta+\delta}(e^0)=t_le^0.\,$ Applying $\,\tau(\theta{}\,\check{}\,)G_{\theta+\delta}\,$ to \text{\eqref{Rg}} then gives $\,Y^{\theta{}\,\check{}}\,e^{\theta_s}\,$ in \text{\eqref{Ye}}. 
\end{proof}

\subsubsection{The action of \texorpdfstring{$\,Y^{\theta{}\,\check{}}\,$}{Lg} on the root system of \texorpdfstring{$\,G_2\,$}{Lg}}

Let $R\,$ be of type $\,G_2\,$ with short and long simple root $\,\alpha_1\,$ and $\alpha_2\,$ respectively. We denote by $\,\beta\,$ and $\,\theta_s\,$ the short roots $\,\alpha_1 + \alpha_2\,$ and $\,2\alpha_1 + \alpha_2 \,$ respectively, by $\,\gamma\,$ and  $\,\theta\,$ the long roots $\,3\alpha_1 + \alpha_2\,$ and  $\,3\alpha_1 + 2\alpha_2 \,$ respectively. The reflection $\,s_{\theta}\,$ has the reduced decomposition $\,s_2s_1s_2s_1s_2\,$ and
\begin{equation}\label{z17}
    Y^{\theta{}\,\check{}}=\tau(\theta{}\,\check{}\,)G_{\theta+\delta\,}G_{\gamma\,}G_{\theta_s\,}G_{\theta\,}G_{\beta\,}G_{\alpha_2}.
\end{equation}
See \text{\cite[Sect. 3.8]{3Baz}}.
\begin{proposition}
 The action of $\,Y^{\theta{}\,\check{}}\,$ on $\,e^0\,$ and $\,e^{\theta_s}\,$ in terms of the double parameter $t_{s,\,l}\,$ is given as
\begin{align}\label{GYe}
    Y^{\theta{}\,\check{}} e^0=&\,t_l^{4} t_s^2 e^0,\nonumber\\
    Y^{\theta{}\,\check{}} e^{\theta_s}=&\,qt_l^{-2}t_s^{-2} e^{\theta_s}-(t_s-t_s^{-1})t_l^2t_s^{-1} e^0.
    \end{align}
\end{proposition}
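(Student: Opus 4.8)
The plan is to follow the pattern of the proof of Proposition~\ref{P1}, now for $G_2$: expand $Y^{\theta{}\,\check{}}$ by its explicit factorisation \eqref{z17}, apply the operators to $e^0$ and to $e^{\theta_s}$ one factor at a time (beginning with the rightmost, $G_{\alpha_2}$), and finish by applying $\tau(\theta{}\,\check{}\,)G_{\theta+\delta}$. The analytic input is just formula \eqref{Gemu} for the action of a single $G_\alpha$ on a monomial, together with the translation rule \eqref{Rj}.

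The case $e^0$ is immediate. By \eqref{Ge} each of $G_{\alpha_2},G_\beta,G_\theta,G_{\theta_s},G_\gamma,G_{\theta+\delta}$ multiplies $e^0$ by its parameter $t_\alpha$; since $\alpha_2,\theta,\gamma$ and the affine root $\theta+\delta$ are long while $\beta,\theta_s$ are short, this yields $t_l^4t_s^2e^0$, and $\tau(\theta{}\,\check{}\,)$ fixes $e^0$, giving the first line of \eqref{GYe}. This is \emph{not} the value $t_l^{2L}t_s^S$ of Proposition~\ref{P1}: here $2L=4$ but $S=ht_s\,\theta=3\neq 2$, because the chain $\alpha_2,\beta,\theta,\theta_s,\gamma$ of $R(s_\theta)$ contains only two short roots rather than $ht_s\,\theta$ of them. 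This failure of the count underlying \eqref{Re}--\eqref{Rc} is exactly why type $G_2$ requires a separate argument.

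For $e^{\theta_s}$ one computes $G_\gamma G_{\theta_s}G_\theta G_\beta G_{\alpha_2}e^{\theta_s}$ step by step, with $\theta_s=\alpha^{(1)}$ in the chain. Since $(\theta_s\, ,\,\alpha^{(k)\,\check{}}\,)$ equals $0$ at $\alpha_2$ and is positive at $\beta,\theta,\theta_s,\gamma$, the intermediate expressions do \emph{not} stay inside $\mathbb{Q}_{q,\,t_{s,\,l}}e^{\theta_s}\oplus\mathbb{Q}_{q,\,t_{s,\,l}}e^0$ as they do for $B,C,F$: already $G_\theta e^{\theta_s}=t_le^{\theta_s}+(t_l-t_l^{-1})e^{-\beta}$, and $G_\beta$ and $G_{\theta_s}$ produce further monomials such as $e^{\alpha_1}$ and $e^{-\theta_s}$. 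I would carry these auxiliary exponentials explicitly through the remaining operators by means of \eqref{Gemu} and the short list of root relations in $G_2$, and verify that after $G_\gamma$ everything collapses to a combination $c_1e^{\theta_s}+c_2e^0$. Applying $\tau(\theta{}\,\check{}\,)G_{\theta+\delta}$ via \eqref{Rj}, together with the identities $\tau(\theta{}\,\check{}\,)G_{\theta+\delta}(e^{\theta_s})=qt_l^{-1}e^{\theta_s}$ and $\tau(\theta{}\,\check{}\,)G_{\theta+\delta}(e^0)=t_le^0$ established in the proof of Proposition~\ref{P1} (valid here because $\theta$ is long and $(\theta_s\, ,\,\theta{}\,\check{}\,)=1$), then reads off the second line of \eqref{GYe}; the $q$-power comes out correctly because $(\theta{}\,\check{}\,,\,\theta_s)=1$.

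The main obstacle is this last bookkeeping. One cannot invoke Bazlov's Lemma~4 in \cite{3Baz} or the closed products \eqref{Re}--\eqref{Rc}, so one must check directly that every spurious monomial produced along the way cancels and that the coefficients of the surviving $e^{\theta_s}$ and $e^0$ are exactly $qt_l^{-2}t_s^{-2}$ and $-(t_s-t_s^{-1})t_l^2t_s^{-1}$. Since $G_2$ has only six positive roots this is a finite, elementary --- though somewhat delicate --- calculation, and it is the step that carries all the content of the proposition.
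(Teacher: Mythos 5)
Your overall strategy is the same as the paper's: expand $Y^{\theta{}\,\check{}}$ by \eqref{z17}, push $e^0$ and $e^{\theta_s}$ through the factors from the right, and finish with $\tau(\theta{}\,\check{}\,)G_{\theta+\delta}$ via \eqref{Rj}; your treatment of $e^0$ and your diagnosis of why $G_2$ falls outside Proposition~\ref{P1} (the chain contains only two short roots, so the counts behind \eqref{Re}--\eqref{Rc} fail) are correct. The genuine flaw is in how you apply \eqref{Gemu} to $e^{\theta_s}$. With the paper's sign convention, $\varepsilon=-1$ whenever $(\theta_s\,,\,\alpha^{(k)}{}\,\check{}\,)>0$, so the sum in \eqref{Gemu} runs up to $|(\theta_s\,,\,\alpha^{(k)}{}\,\check{}\,)|-1$ and is empty whenever the pairing equals $1$; an extra monomial appears only when the pairing is $2$, i.e.\ only for $G_{\theta_s}$ itself, which produces the single term $-h_se^0$. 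In the $G_2$ chain $\alpha_2,\beta,\theta,\theta_s,\gamma$ the pairings with $\theta_s$ are $0,1,1,2,1$, so formula \eqref{Ge} remains valid here: its proof needs only \eqref{Lemma2}-type facts, not the counting statements of Bazlov's Lemma~4, which is the part that genuinely fails in $G_2$. Your sample value $G_\theta e^{\theta_s}=t_le^{\theta_s}+(t_l-t_l^{-1})e^{-\beta}$ contradicts the very formula \eqref{Gemu} you name as your analytic input (it should be $t_l^{-1}e^{\theta_s}$), and likewise no monomials $e^{\alpha_1}$, $e^{-\theta_s}$ ever arise. The computation therefore never leaves $\mathbb{Q}_{q,\,t_{s,\,l}}e^{\theta_s}\oplus\mathbb{Q}_{q,\,t_{s,\,l}}e^0$: one finds $G_\gamma G_{\theta_s}G_\theta G_\beta G_{\alpha_2}e^{\theta_s}=t_l^{-1}t_s^{-2}e^{\theta_s}-(t_s-t_s^{-1})t_lt_s^{-1}e^0$, and applying $\tau(\theta{}\,\check{}\,)G_{\theta+\delta}$ as in the proof of Proposition~\ref{P1} gives \eqref{GYe} at once.

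This matters beyond bookkeeping: the ``delicate cancellation'' that you identify as carrying all the content of the proposition, and which you defer (``I would carry these auxiliary exponentials \dots and verify that after $G_\gamma$ everything collapses''), is not only unnecessary under the correct reading, it would actually fail under yours. Carrying your version of the operator through the chain leaves monomials such as $e^{\pm\alpha_1}$, $e^{-\beta}$ and $e^{-\theta_s}$ after $G_\gamma$, and $G_{\theta+\delta}$ only shifts exponents by multiples of $\theta$, so none of these can merge into $e^{\theta_s}$ or $e^0$; the collapse you hope to verify does not occur, which signals the sign-convention error rather than proving \eqref{GYe}. Since that central verification is exactly the step you leave unexecuted, the proposal as written does not establish the proposition; corrected as above, it reduces to the paper's short scalar computation using \eqref{q}, \eqref{Ge} and \eqref{Rj}.
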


\begin{proof}
Apply $\,G_\alpha\,$ and $\,\tau(\theta{}\,\check{}\,)G_{\theta+\delta}\,$ from \text{\eqref{z17}} to $\,e^0\,$ and $\,e^{\theta_s}\,$ using \text{\eqref{q}},  \text{\eqref{Ge}} and \text{\eqref{Rj}}.
\end{proof}

We will use $\,Y^{\theta{}\,\check{}} e^{\theta_s},\,$ $\,Y^{\theta{}\,\check{}} e^{0}\,$ and the unitary property of $\, Y^{\theta{}\,\check{}}\,$ with respect to Cherednik's inner product to find $\,(e^{\alpha}\,,\,1)\,$ for all $\,\alpha\,$ in $\,R_s.\,$ We will then use these to calculate $\,\big(\chi(V_{\theta_s})\,,\,1\big)\,$ from which we will deduce $\,GM_{\theta_s}(q)\,$ in $\,\bigwedge V_{\theta_s}.\,$
\par The next two lemmas deal respectively with the linear combination of $\,\alpha\,$ and $\,(e^\alpha\,\,,1),\, \alpha \in \,R_s^+,\,$ with respect to the dominant root $\,\theta_s\,$ of $\,R_s.$
\begin{lemma}\label{r}
Let $R\,$ be of type $\,B, C, F\,$ or $\,G\,$ and let $\,\theta_s\,$ be the dominant root of $\,R_s.\,$ If $\,\alpha \in R_s^+\backslash \{\theta_s\},\,$ then there exists $\,s_{i_1}\cdots s_{i_k}\in W\,$ such that
\begin{equation*}
    \alpha=s_{i_1}\cdots s_{i_k}\theta_s=\theta_s-(\alpha_{i_1}+\cdots+ \alpha_{i_k}),
\end{equation*}
where $\,s_{i_j}\,$ is the simple reflection along $\,\alpha_{i_j}\in \Phi.\,$
\end{lemma}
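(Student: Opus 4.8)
The plan is to argue by induction on $n:=\mathrm{ht}(\theta_s)-\mathrm{ht}(\alpha)$, where $\mathrm{ht}$ denotes the sum of the coefficients of a root expressed in the basis $\Phi$. Since $\theta_s$ is the highest short root, $\theta_s-\beta\in\mathcal{Q}^+$ for every $\beta\in R_s$, so $n\in\mathbb{Z}^{\geq 0}$; and when $n=0$ the element $\theta_s-\alpha\in\mathcal{Q}^+$ has height $0$, hence $\alpha=\theta_s$, and the empty product (with $k=0$) proves the statement.

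For the inductive step, take $\alpha\in R_s^+\setminus\{\theta_s\}$, so that $\nu:=\theta_s-\alpha$ is a non-zero element of $\mathcal{Q}^+$. The first task is to exhibit a simple root $\alpha_i$ with $(\alpha,\alpha_i)<0$. Suppose not, so $(\alpha_j,\alpha)\geq 0$ for every $j$; writing $\nu=\sum_j c_j\alpha_j$ with $c_j\geq 0$ gives $(\nu,\alpha)=\sum_j c_j(\alpha_j,\alpha)\geq 0$. On the other hand $\alpha$ and $\theta_s$ have the same length, so $(\nu,\theta_s+\alpha)=(\theta_s,\theta_s)-(\alpha,\alpha)=0$, and therefore $(\nu,\nu)=(\nu,\theta_s-\alpha)=-2(\nu,\alpha)\leq 0$, contradicting $\nu\neq 0$.

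Now fix such an $\alpha_i$; it is not proportional to $\alpha$ (otherwise $(\alpha,\alpha_i)=2>0$). Since $\alpha$ is short we have $(\alpha,\alpha_i^{\vee})\in\{0,\pm 1\}$ — for instance because $|(\alpha,\alpha_i^{\vee})|=2|(\alpha,\alpha_i)|/(\alpha_i,\alpha_i)<2\|\alpha\|/\|\alpha_i\|\leq 2$ by the Cauchy–Schwarz inequality together with $\|\alpha\|\leq\|\alpha_i\|$, or by \eqref{Lemma2} since the short roots of $R$ form a single $W$-orbit — so in fact $(\alpha,\alpha_i^{\vee})=-1$. Consequently $\beta:=\alpha+\alpha_i$ is a root; it is positive, because its $\Phi$-coordinates are those of $\alpha$ with the $\alpha_i$-coordinate increased by $1$ and hence all $\geq 0$; and it is short, since
\[
(\beta,\beta)=(\alpha,\alpha)+2(\alpha,\alpha_i)+(\alpha_i,\alpha_i)=(\alpha,\alpha)-(\alpha_i,\alpha_i)+(\alpha_i,\alpha_i)=(\alpha,\alpha).
\]
Moreover $(\beta,\alpha_i^{\vee})=(\alpha,\alpha_i^{\vee})+2=1$, so $s_i\beta=\beta-(\beta,\alpha_i^{\vee})\alpha_i=\beta-\alpha_i=\alpha$. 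As $\beta\in R_s^+$ and $\mathrm{ht}(\beta)=\mathrm{ht}(\alpha)+1$, the inductive hypothesis applied to $\beta$ (or the base case, if $\beta=\theta_s$) yields simple reflections with $\beta=s_{i_2}\cdots s_{i_k}\theta_s=\theta_s-(\alpha_{i_2}+\cdots+\alpha_{i_k})$. Setting $i_1:=i$ and applying $s_{i_1}$ gives $\alpha=s_{i_1}s_{i_2}\cdots s_{i_k}\theta_s$ together with $\alpha=\beta-\alpha_{i_1}=\theta_s-(\alpha_{i_1}+\cdots+\alpha_{i_k})$, which is the assertion for $\alpha$.

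The step that requires real care is making sure the descent lowers the height by exactly one while staying inside $R_s$, so that each simple reflection removes precisely one simple root: this is exactly what the computation $(\beta,\beta)=(\alpha,\alpha)$ guarantees, once $(\alpha,\alpha_i^{\vee})$ is pinned to $-1$, and it is the place where the hypothesis that $R_s\neq\varnothing$ and $\alpha$ is short is genuinely used. The remaining ingredients — the existence of a descent direction $\alpha_i$ and the tracking of which simple root is removed at each stage — are routine consequences of the reflection formula $s_i\mu=\mu-(\mu,\alpha_i^{\vee})\alpha_i$.
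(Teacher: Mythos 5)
Your proof is correct and follows essentially the same route as the paper: ascend from $\alpha$ to $\theta_s$ by simple reflections, using that a short root pairs with any simple coroot in $\{0,\pm1\}$ so each non-dominant short root $\alpha$ satisfies $s_i\alpha=\alpha+\alpha_i$ for some $i$, then invert the chain. The only difference is cosmetic: you organize the ascent as an induction on height and prove the two auxiliary facts directly (the $\{0,\pm1\}$ bound via Cauchy--Schwarz and the non-existence of a dominant short root other than $\theta_s$ via the $(\nu,\nu)\le 0$ computation), where the paper simply cites Bourbaki and Stembridge for them.
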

\begin{proof}
Recall the following. (1) The set of dominant weights $\, \mathcal{P}^+\,$ of $\,\mathfrak{g}\,$ is $\,\{\omega \in \mathcal{P} \mid (\omega\,\,,\alpha_i\check{}\,)\geq 0\,\, \forall \,\, \alpha_i\in\Phi\},\,$ \text{\cite[Chap.VI, Sect. 1.10]{14Bou}}. (2) $\mathcal{P}^+\cap R_s=\{\theta_s\},\,$ \text{\cite[Sect. 2]{16Ste}}. (3) $\,(\alpha\,\,,\,\beta\,\check{}\,)\in \{0, \pm 1\}\,$ for $\,\alpha \in R_s,\, \beta \in R,\,$ \text{\cite[Chap.VI, Sect. 1.3]{14Bou}}. Hence, if $\,\alpha \in R_s^+\backslash \{\theta_s\},\,$ then there exists $\,\alpha_i \in \Phi \,$ such that $\,s_i(\alpha)=\alpha+\alpha_i.\,$ By Weyl conjugacy there exists $\,s_{i_k}\cdots s_{i_1}\in W\,$ such that
\begin{equation*}
    \theta_s=s_{i_k}\cdots s_{i_1}\alpha=\alpha+\alpha_{i_1}+\cdots+ \alpha_{i_k}.
\end{equation*}
The lemma holds since each $\,s_{i_j}\,$ is an involution.
\end{proof}

\begin{lemma}\label{Amax}
Let $R\,$ be of type $\,B, C, F\,$ or $\,G.\,$ If there exists a constant $\,X \in \mathbb{Q}[q^{\pm 1}, t_{s,\,l}^{\pm 1}]\,$ such that the formula
\begin{equation}\label{Xht}
    (e^{\lambda}\,,\,1)=t_l^{-2ht_l\,\lambda}t_s^{-2ht_s\,\lambda}X
\end{equation}
holds when $\lambda$ is the dominant weight $\,\theta_s\,$ in $\,R_s,\,$ then the formula holds when $\lambda=\alpha\,$ for all $\,\alpha \in R^+_s.\,$
\end{lemma}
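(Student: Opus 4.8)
The plan is to prove the identity for all $\alpha\in R_s^+$ by a downward induction on the height of $\alpha$, starting from $\theta_s$ and descending within $R_s^+$, the descent being supplied by Lemma~\ref{r}. The base case is $\alpha=\theta_s$, the unique short positive root of maximal height, which is exactly the hypothesis. For the inductive step fix $\alpha\in R_s^+\setminus\{\theta_s\}$ and assume the formula holds for every short positive root of height larger than that of $\alpha$. By the argument establishing Lemma~\ref{r}, since $\alpha$ is not dominant there is a simple root $\alpha_i$ with $(\alpha,\alpha_i\check{}\,)=-1$, so that $\beta:=s_i\alpha=\alpha+\alpha_i$ is again a short positive root, now of height $ht\,\alpha+1$; the inductive hypothesis thus applies to $\beta$, and it remains to transport the formula across the single reflection $s_i$.

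First I would record the computation relating $(e^\alpha,1)$ and $(e^\beta,1)$ through the Demazure--Lusztig operator $T_i$. Since $(\beta,\alpha_i\check{}\,)=1$ we have $s_ie^\beta=e^{\beta-\alpha_i}=e^\alpha$ and $(1-s_i)e^\beta/(1-e^{\alpha_i})=-e^\alpha$, so that
\[
T_ie^\beta=t_{\alpha_i}e^\alpha+(t_{\alpha_i}-t_{\alpha_i}^{-1})(-e^\alpha)=t_{\alpha_i}^{-1}e^\alpha,
\qquad\text{equivalently}\qquad e^\alpha=t_{\alpha_i}\,T_ie^\beta .
\]
Next I would use the unitarity of $T_i$ for Cherednik's inner product, that is $(T_if,g)=(f,T_i^{-1}g)$ — a standard fact, and the building block of the unitarity of the $Y$-operators used in this section — together with $T_i\cdot 1=t_{\alpha_i}\cdot 1$, hence $T_i^{-1}\cdot 1=t_{\alpha_i}^{-1}\cdot 1$. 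Because the $*$-involution inverts $t_{s,l}$, pulling the scalar $t_{\alpha_i}^{-1}$ out of the second argument contributes a factor $t_{\alpha_i}$, whence
\[
(e^\alpha,1)=t_{\alpha_i}\,(T_ie^\beta,1)=t_{\alpha_i}\,(e^\beta,T_i^{-1}\cdot 1)=t_{\alpha_i}\cdot t_{\alpha_i}\,(e^\beta,1)=t_{\alpha_i}^2\,(e^\beta,1).
\]

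Finally I would do the height bookkeeping. If $\alpha_i\in\Phi_l$ then $t_{\alpha_i}^2=t_l^2$ and $(ht_l\,\beta,ht_s\,\beta)=(ht_l\,\alpha+1,ht_s\,\alpha)$; if $\alpha_i\in\Phi_s$ then $t_{\alpha_i}^2=t_s^2$ and $(ht_l\,\beta,ht_s\,\beta)=(ht_l\,\alpha,ht_s\,\alpha+1)$. In both cases, substituting the inductive identity $(e^\beta,1)=t_l^{-2ht_l\,\beta}t_s^{-2ht_s\,\beta}X$ into $(e^\alpha,1)=t_{\alpha_i}^2(e^\beta,1)$ absorbs the extra factor $t_{\alpha_i}^2$ and leaves exactly $(e^\alpha,1)=t_l^{-2ht_l\,\alpha}t_s^{-2ht_s\,\alpha}X$, which closes the induction. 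I expect the delicate point to be the middle step: one must use the precise adjointness of $T_i$ for the Cherednik pairing and track how $*$ acts on the parameters, since the naive symmetry argument — $\Delta$ is $W$-invariant, so the coefficient of $e^{-\alpha}$ in $\Delta$ equals that of $e^{-s_i\alpha}$ — would give the false equality $(e^\alpha,1)=(e^\beta,1)$; it is exactly the Hecke deformation $T_i$, rather than the bare reflection $s_i$, that produces the needed factor $t_{\alpha_i}^{\pm2}$. With that in hand, the well-foundedness of the induction — that every element of $R_s^+\setminus\{\theta_s\}$ can be raised in height to another element of $R_s^+$ by a simple reflection — is precisely Lemma~\ref{r}.
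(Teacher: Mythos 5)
Your proposal is correct and follows essentially the same route as the paper: a single-reflection step computing $T_ie^{\beta}=t_{\alpha_i}^{-1}e^{\alpha}$ via the Demazure--Lusztig formula, the unitarity of $T_i$ for Cherednik's pairing together with the $*$-twist on scalars to get $(e^{\alpha},1)=t_{\alpha_i}^{2}(e^{\beta},1)$, and then iteration along the chain supplied by Lemma~\ref{r}. The paper simply records the one-step identity and says ``apply Lemma~\ref{r}'' rather than phrasing it as downward induction on height, but the content is identical.
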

\begin{proof}
Let $\,\beta, \alpha \in R^+_s\,$ such that 
\begin{equation*}
    \alpha=s_{i_j}\beta=\beta-\alpha_{i_j}.
\end{equation*}
By \text{\eqref{z19}} and \text{\eqref{Gemu}}
\begin{equation*}
    T_{i_j}e^{\beta}=s_{i_j}G_{\alpha_{i_j}}e^{\beta}=t_{\alpha_{i_j}}^{-1}e^{\alpha}.
\end{equation*}
Since $\,T\,$ is unitary with respect to Cherednik's inner product,
\begin{equation*}
    t_{\alpha_{i_j}}^{-1}(e^{\alpha}\,,\,1)=t_{\alpha_{i_j}}(e^{\beta}\,,\,1).
\end{equation*}
Therefore, if \text{\eqref{Xht}} holds for $\,\beta,\,$ then
\begin{equation*}
    (e^{\alpha}\,,\,1)=t_l^{-2ht_l\,\alpha}t_s^{-2ht_s\,\alpha}X.
\end{equation*}
Apply Lemma~\text{\ref{r}} and the lemma is proved.
\end{proof}
\subsection{Calculating \texorpdfstring{$\,GM_{\theta_s}(q)$}{Lg}}
Let $R\,$ be of type $\,B, C, F\,$ or $\,G.\,$  Recall the unitary property of $\,Y^{\theta{}\,\check{}}\,$ with respect to Cherednik's inner product $\,(\,\,,\,)\,$ on the algebra $\,\mathbb{Q}_{q,\,t_{s,\,l}}[\mathcal{P}]\,$ implies 
\begin{equation*}
    (Y^{\theta{}\,\check{}}\,e^{\theta_s}\,,\,1)=(e^{\theta_s}\,,\,Y^{-\theta{}\,\check{}}1).
    \end{equation*}
Using $\,Y^{\theta{}\,\check{}}\,e^{\theta_s}\,$ and $\,Y^{\theta{}\,\check{}}\,e^0\,$ in \text{\eqref{Ye}} and \text{\eqref{GYe}} we have
\begin{equation*}
    (e^{\theta_s}\,,\,1)=\frac{t_l^{-2ht_l\,\theta_s}t_s^{-2ht_s\,\theta_s}}{qt_l^{-2L-2}t_s^{-2S+2}-1}(t_s^2-1)(1\,,\,1),
\end{equation*}
where $\,L=ht_l\,\theta, \,S=ht_s\,\theta,\, ht_l\,\theta_s=\frac{L+1}{2} \,$ and  $\, ht_s \,\theta_s=S-1\,$ (Lemma~$4\,(d),$ $(e)\,$ in \text{\cite{3Baz}}).
Let $\,\alpha\,$ in $\,R_s^+.\,$ By Lemma~\text{\ref{Amax}}
\begin{equation}\label{s}
    (e^{\alpha}\,,\,1)=\frac{t_l^{-2ht_l\,\alpha}t_s^{-2ht_s\,\alpha}}{qt_l^{-2L-2}t_s^{-2S+2}-1}(t_s^2-1)(1\,,\,1),\qquad \,\text { for all } \alpha \in R_s^+.
\end{equation}

By \text{\cite[Sect. 2.5]{3Baz}} and \text{\eqref{s}}
\begin{align*}
    (e^{-\alpha}\,,\,1)&=(e^{\alpha}\,,\,1)^{\bar{*}}\nonumber\\
    &=\frac{qt_l^{2(ht_{l}\,\alpha-(L+1))}t_s^{2(ht_{s}\,\alpha-S)}}{qt_l^{-2L-2}t_s^{-2S+2}-1}(t_s^{2}-1)(1\,,\,1), \qquad \,\text { for all } \alpha \in R_s^+,
\end{align*}
where $\,\bar{*}\,$ is the involution on $\,\mathbb{Q}_{q,\,t_{s,\,l}}[\mathcal{P}]\,$ which acts on $\,q, t_{s,\,l}, \text{ and } e^{\alpha}\,$ as follows.
\begin{equation*}
    \bar{*}:t_{s,\,l}\mapsto t_{s,\,l}^{-1}\,, \qquad q\mapsto q^{-1},\qquad e^{\alpha}\mapsto e^{\alpha}.
\end{equation*}
Hence,
\begin{align}\label{v}
    \frac{(\sum_{\alpha \in R_s}\,e^{\alpha}\,,\,1)}{(1\,,\,1)}&=\frac{t_s^{2}-1}{qt_l^{-2(L+1)}t_s^{-2(S-1)}-1}\nonumber\\
    &\times \sum_{\alpha \in R_s^+} (t_l^{-2ht_{l}\,\alpha}t_{s}^{-2ht_{s}\,\alpha}+qt_l^{2ht_{l}\,\alpha-2(L+1)}t_{s}^{2ht_{s}\,\alpha-2S}).
\end{align}
Recall the character of the little adjoint module 
$\,
    \chi_{\theta_s} = r_s+\sum_{\alpha\in R_s}\,e^{\alpha},
\,$
where $\,r_s=\# (R_s \cap \Phi).\,$ Therefore,
\begin{equation}\label{z}
    \frac{\big(\chi_{\theta_s}\,,\,1\big)}{(1\,,\,1)}=r_s+\frac{(\sum_{\alpha \in R_s}\,e^{\alpha}\,,\,1)}{(1\,,\,1)}.
\end{equation}
Let $\,\lambda \in \mathcal{P}^+,\,$ then $\,\frac{\langle\chi_{\lambda}\,\,,\,1\rangle}{\langle 1\,\,,\,1\rangle}=\frac{(\chi_{\lambda}\,\,,\,1)}{(1\,\,,\,1)}\,$ since $\,\chi_{\lambda}\,$ and $1$ are $\,W-$invariant, see \text{\cite[(5.1.38)]{10Mac}}.
By \text{\eqref{a}} then,
\begin{equation*}
  GM_{\lambda}(-q)=GM_{0}(-q)\frac{(\chi_{\lambda}\,,\,1)_{2,1}}{(1\,,\,1)_{2,1}}.
\end{equation*}
We set
$\,
    t_{\alpha}=q^{\frac{-k(\alpha)}{2}},  
\,$
see \text{\cite[Sect. 4]{17Mac}} and \text{\cite{3Baz}}. Recall from Sect.\text{\eqref{zz}} that $\,(\sigma_k \, , \, \alpha)=ht_l\,\alpha+2ht_s\,\alpha,\,$ where $\,ht_m\,\alpha=\sum_{\alpha_i\in\Phi_m}\lambda_i,\,\,m\in\{s,\,l\},\,$ when $\,\alpha=\sum_{i=1}^r \lambda_i\alpha_i.\,$ Using \text{\eqref{v}}, \text{\eqref{z}} and $\,(k_s,k_l)=(2,1), \,GM_{\theta_s}(-q)\,$ becomes
\begin{equation}\label{t}
    GM_{\theta_s}(-q)=GM_{0}(-q)\bigg(r_s+\big(\frac{1-q^{-2}}{1-q^{L+2S}}\big)\sum_{\alpha \in R_s^+} \big(q^{(\sigma_k \, , \, \alpha)}+q^{2S+L+2-(\sigma_k \, , \, \alpha)}\big)\bigg).
\end{equation}
We will simplify this case by case for each $\,R\,$ of type $\,B, C, F\,$ or $\,G.\,$

\subsubsection{\texorpdfstring{Type $\,B_r\,$}{Lg}}
Recall the following in the root system of type $\,B_r.\,$
\begin{align*}
    R_s^+=&\{\sum_{i\leq m\leq r} \alpha_m, \,\, 1\leq i \leq r\},\quad \Phi_l=\{\alpha_1,\cdots,\alpha_{r-1}\},\quad \Phi_s=\{\alpha_r\},\\
    \theta=&\alpha_1+2\alpha_2 + 2\alpha_3+\cdots+2\alpha_r.
\end{align*}See \text{\cite[Plate II]{14Bou}}. If $\,\alpha=\alpha_i+\cdots+\alpha_r,\,$ then $\,(\sigma_k \, , \, \alpha)=r+2-i.\,$ Using $\,GM_{0}(q)\,$ in \text{\eqref{j}}, $\,GM_{\theta_s}(q)\,$ in \text{\eqref{t}} simplifies to
\begin{equation}\label{z27}
    GM_{\theta_s}(q)=q+q^{2r}.
\end{equation}
This result in \text{\eqref{z27}} implies that when $\,\mathfrak{g}\,$ is of type $B_r$ the only copies of the $2r+1$-dimensional little adjoint module in its exterior algebra are the exterior powers $\bigwedge^1 V_{\theta_s}\,$ and $\bigwedge^{2r} V_{\theta_s},\,$ which are known to be isomorphic to the little adjoint module as $\mathfrak{g}$-modules (recall the basis of $\bigwedge^1 V_{\theta_s}\,$ and its Poincar\'e duality to $\bigwedge^{2r} V_{\theta_s},\,$ \text{\cite{1Me, 8Wil}}).  
\subsubsection{\texorpdfstring{Type $\,C_r\,$}{Lg}}
Recall that in the root system of type $\,C_r\,$ \begin{align*}
    R_s^+&=J \cup K=\{\sum_{i\leq m<j} \alpha_m \} \, \bigcup \, \{\sum_{i\leq m < j} \alpha_m+\sum_{j\leq m < r}2\alpha_m + \alpha_r\}, \\
    & 1\leq i<j \leq r.\nonumber \\
    (R_s^+)\,\check{}&=J\,\check{} \cup K\,\check{}, \quad\text{ where }\, J\,\check{}=\{\sum_{i\leq m < j} \alpha\,\check{}_m, \quad 1\leq i<j \leq r\},\\
    K\,\check{}&=\{\sum_{i\leq m < j} \alpha\,\check{}_m+ \sum_{j \leq m \leq r}2\alpha\,\check{}_m, \quad 1\leq i<j \leq r \}.\\
    \Phi_s&=\{\alpha_1,\cdots,\alpha_{r-1}\},\,\Phi_l=\{\alpha_r\},\,  \theta=2\alpha_1+2\alpha_2+\cdots +2\alpha_{r-1} +2\alpha_{r}.
\end{align*}
See \text{\cite[Plate III]{14Bou}}. Let $\,Y\subseteq R_s^+$ and let $\,h_{Y}(n)=\#\{\alpha\in Y \mid (\sigma_k \, , \, \alpha)=n\}. \,$ By \text{\eqref{b}} then
\begin{equation*}
   (\sigma_k \, , \, \alpha)=(\rho_k\,\,,\alpha\,\check{}\,) \qquad \text{for all }\,\alpha \in J.
\end{equation*}
Hence,
\begin{equation}\label{z13}
    h_{J}(n)=h_{J\,\check{}\,}(n).
\end{equation}
Note that $\,ht_l\,\alpha=0\,$ for all $\,\alpha \in J.\,$ In $\,K,$
\begin{equation*}
    (\sigma_k \, , \, \alpha)=(\rho_k\,\,,\alpha\,\check{}\,)-1 \qquad \text{for all }\,\alpha \in K.
\end{equation*}
Therefore,
\begin{equation}\label{z10}
h_{K}(n-1)=h_{K\,\check{}\,}(n).
\end{equation}
Table \text{\ref{(s_ks,a)CrK}} below is from \text{\eqref{z10}} and Table \text{\ref{(r_k,a)CrZ}}.
\begin{table}[htb]
\centering
\begin{tabular}{ |c|c|c| }
\hline
$n$ & $h^o_{K}(n)$ & $h^e_{{K}}(n)$\\
\hline
$3$ & $1$ & $1$\\
\hline
$5$  & $1$ & $1$\\
\hline
$7$ & $2$ & $2$\\
\hline
$9$ & $2$ & $2$\\
\hline
$\vdots$ & $\vdots$ & $\vdots$\\
\hline
$2r- 5$ & $\frac{r-3}{2}$ & $\frac{r-2}{2}$\\
\hline
$2r- 3$ & $\frac{r-1}{2}$ & $\frac{r-2}{2}$\\
\hline
$2r-1$ &$\frac{r-1}{2}$ & $\frac{r}{2}$\\
\hline
$2r+1$ & $\frac{r-1}{2}$& $\frac{r-2}{2}$\\
\hline
$2r+3$ &$\frac{r-3}{2}$ & $\frac{r-2}{2}$\\
\hline
$\vdots$ & $\vdots$ & $\vdots$ \\
\hline
$4r-7$ & $1$ & $1$\\
\hline
$4r-5$ & $1$ & $1$\\
\hline
\end{tabular}
\caption{The multiplicity of the special heights of the  roots of the subset $K$ of $R_s^+\,$ in type $\,C_r$}\label{(s_ks,a)CrK}
\end{table}

Observe that when $\,r\,$ is odd
\begin{align}\label{x}
     &h^o_{{K}}(4p-1)=h^o_{{K}}(4p+1)=h^o_{{K}}(4r-4p-1)=h^o_{{K}}(4r-4p-3)=p,\nonumber\\
    & p=1,\cdots, \frac{r-1}{2}.
\end{align}
Whereas, when $\,r\,$ is even 
\begin{align}\label{y}
     &h^e_{{K}}(4p-1)=h^e_{{K}}(4p+1)=h^e_{{K}}(4r-4p-1)=h^e_{{K}}(4r-4p-3)=p,\nonumber\\
     &p=1,\cdots, \frac{r-2}{2};\nonumber\\
     &h^e_{{K}}(2r-1)=\frac{r}{2}.
      \end{align}
Therefore, using \text{\eqref{m}},\,\text{\eqref{z13}},\,\text{\eqref{z10}}\,\text{\eqref{x}},\,\text{\eqref{y}}, $\,h_{J\,\check{}\,}(n)\,$ in Table \text{\ref{(r_k,a)CrZ}} \,and $\,h_{K}(n)\,$ in Table \text{\ref{(s_ks,a)CrK}},
\begin{equation*}
GM_{\theta_s}(-q)=GM_{0}(-q)\bigg(r_s+\big(\frac{1-q^{-2}}{1-q^{L+2S}}\big)\sum_{\alpha \in J \cup K} \big(q^{(\sigma_k \, , \, \alpha)}+q^{2S+L+2-(\sigma_k \, , \, \alpha)}\big)\bigg)
\end{equation*}
when $\,r\,$ is odd becomes
\begin{align*}
        GM_{\theta_s}(-q)&=\prod_{p=1}^{r-2} (1-q^{4p+1})\Bigg((r-1)(1-q^{4r-3})+(1-q^{-2})\bigg( \sum_{p=1}^{r-1}\big((r-p)\times\nonumber\\
        &(q^{2p}+q^{4r-1-2p})\big)+\big(\sum_{p=1}^{\frac{r-3}{2}} (q^{4p+1}+q^{4r-1-(4p+1)}+q^{4p-1}+\nonumber\\
        &q^{4r-1-(4p-1)}+q^{4r-4p-1}+q^{4r-1-(4r-4p-1)}+q^{4r-4p-3}+\nonumber\\
        &q^{4r-1-(4r-4p-3)})\big)+
        \frac{r-1}{2}\big(q^{2r+1}+q^{4r-1-(2r+1)}+q^{2r-1}+\nonumber\\
        &q^{4r-1-(2r-1)}+q^{2r-3}+q^{4r-1-(2r-3)}\big)\bigg)\Bigg),
\end{align*}
and when $\,r\,$ is even $\,GM_{\theta_s}(-q)\,$ becomes
\begin{align*}
        GM_{\theta_s}(-q)&=\prod_{p=1}^{r-2} (1-q^{4p+1})\Bigg((r-1)(1-q^{4r-3})+(1-q^{-2})\bigg( \sum_{p=1}^{r-1}\big((r-p)\times\nonumber\\
        &(q^{2p}+q^{4r-1-2p})\big)+\big(\sum_{p=1}^{\frac{r-2}{2}} (q^{4p+1}+q^{4r-1-(4p+1)}+q^{4p-1}+\nonumber\\
        &q^{4r-1-(4p-1)}+q^{4r-4p-1}+q^{4r-1-(4r-4p-1)}+\nonumber\\
        &q^{4r-4p-3}+q^{4r-1-(4r-4p-3)})\big)+\frac{r}{2}\big(q^{2r-1}+q^{4r-1-(2r-1)}\big)\bigg)\Bigg).
\end{align*}
In both cases $\,GM_{\theta_s}(-q)\,$ simplifies such that
\begin{equation*}
   GM_{\theta_s}(q)=\bigg(\prod_{p=1}^{r-2} (1+q^{4p+1})\bigg) \sum_{p=1}^{r-1} (q^{4p-3}+q^{4p}).
\end{equation*}

\subsubsection{\texorpdfstring{Type $\,F_4$}{Lg}}
Recall that in the root system of $\,F_4,\,$ $\,\theta=2\alpha_1+3\alpha_2 + 4\alpha_3+2\alpha_4\,$ when $\,\Phi_l=\{\alpha_1, \alpha_2\}\,$ and $\,\Phi_s=\{\alpha_3,\alpha_4\}.\,$ See \text{\cite[Plate II]{14Bou}}. We use $\,GM_{0}(q)\,$ in \text{\eqref{u}} and $\,(\sigma_k \, , \, \alpha), \,\alpha \in R_s^+,\,$ in Table \text{\ref{n}} to simplify $\,GM_{\theta_s}(q)\,$ in \text{\eqref{t}} and obtain
\begin{equation*}
    GM_{\theta_s}(q)=(1+q^9)(q+q^8+q^9+q^{16}).
\end{equation*}

\subsubsection{\texorpdfstring{Type $\,G_2\,$}{Lg}}
Recall that in $\,G_2,\,$ if $\,\Phi_l=\{\alpha_{1}\}\,$ and $\,\Phi_s=\{\alpha_2\},\,$ then
\begin{equation*}
    R_s^+=\{\alpha_1, \alpha_1+\alpha_2, 2\alpha_1+\alpha_2\},\quad \text{and }\,\theta=3\alpha_1+2\alpha_2.
\end{equation*}
Using \text{\eqref{w}} $\,GM_{\theta_s}(q)\,$ in \text{\eqref{t}} simplifies to
\begin{equation}\label{z29}
    GM_{\theta_s}(q)=(1+q^3)(q+q^2+q^3).
\end{equation}

\begin{remark}
Given the small sizes of $\,R_s\,$ in type $\,B_2\,$ and $\,G_2,\,$ the reader can verify $\,GM_0(q)\,$ and $\,GM_{\theta_s}(q)\,$ in \text{\eqref{j}}, \text{\eqref{w}}, \text{\eqref{z27}} and \text{\eqref{z29}} by decomposing the exterior powers of $\,\bigwedge V_{\theta_s}\,$ in both cases of $\,R\,$ into their irreducible modules from their highest weight vectors. 
\end{remark}
\begin{theorem}\label{T2}
Let $R\,$ be of type $\,B, C\,$ or $\,F.\,$ The graded multiplicity of the little adjoint module $\,V_{\theta_s}\,$ in its exterior algebra $\,\bigwedge V_{\theta_s}\,$ is given by
 \begin{equation}\label{z9}
    GM_{\theta_s}(q)=\prod_{i=1}^{r_s-1}(1+q^{2h_i+1})\sum_{i=1}^{r_s}(q^{2h_i-(2h_1-1)}+q^{2h_i});
\end{equation}
while when $\,R\,$ is of type $\,G_2\,$
\begin{equation*}
    GM_{\theta_s}(q)=(1+q^{h_1})(q+q^{2}+q^3),
\end{equation*}
where $\, h_1,\cdots,h_{r_s}\,$ are the special exponents of $\,(R_s^+)\,\check{}.\,$
\end{theorem}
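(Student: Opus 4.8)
The plan is to derive Theorem~\ref{T2} from the master formula \eqref{t} for $GM_{\theta_s}(-q)$ by evaluating it separately in each of the four types $B_r$, $C_r$, $F_4$, $G_2$ and then recognising each resulting polynomial as the right-hand side of \eqref{z9} (respectively the $G_2$ formula) once the special exponents of $(R_s^+)\,\check{}$ are substituted. So the argument has two layers: first, the DAHA computation that produces the explicit polynomials; second, an elementary bookkeeping step that rewrites them in terms of the $h_i$.

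For the first layer I would proceed as already set up in this section: starting from \eqref{a}, use the unitarity of $Y^{\theta\,\check{}}$ for Cherednik's inner product, Proposition~\ref{P1} (and its $G_2$ analogue), and Lemmas~\ref{r}, \ref{Amax} together with the unitarity of the Demazure--Lusztig operators $T_i$ to obtain $(e^{\alpha}\,,\,1)$ uniformly for every $\alpha\in R_s^+$; summing over $R_s$, adding $r_s$, and feeding this together with Theorem~\ref{T1} into $GM_{\theta_s}(-q)=GM_0(-q)(\chi_{\theta_s}\,,\,1)/(1\,,\,1)$ with $t_\alpha=q^{-k(\alpha)/2}$ gives \eqref{t}. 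Evaluating \eqref{t} then needs the special heights $(\sigma_k,\alpha)$ of the short roots: in types $B_r$, $F_4$, $G_2$ these are read off directly (Tables~\ref{n} and~\ref{h}); in type $C_r$ one splits $R_s^+=J\cup K$, uses $(\sigma_k,\alpha)=(\rho_k,\alpha\,\check{}\,)$ on $J$ and $(\sigma_k,\alpha)=(\rho_k,\alpha\,\check{}\,)-1$ on $K$, and imports the multiplicity data \eqref{z14}--\eqref{z15} and Tables~\ref{(r_k,a)CrZ}--\ref{(s_ks,a)CrK}. After the alternating sum multiplied by $1-q^{-2}$ telescopes---uniformly in the parity of $r$ thanks to \eqref{x}--\eqref{y}---one obtains the polynomials displayed at the end of the four type-specific subsections.

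The remaining step is to match these against \eqref{z9}. Recall the special-exponent lists: $B_r:\{r\}$, $C_r:\{2i\}_{i=1}^{r-1}$, $F_4:\{4,8\}$, $G_2:\{3\}$. In type $B_r$, $r_s=1$ and $h_1=r$, so the product in \eqref{z9} is empty and the formula collapses to $q^{2h_1-(2h_1-1)}+q^{2h_1}=q+q^{2r}$. In type $C_r$, $r_s=r-1$ and $h_i=2i$, so $2h_i+1=4i+1$ and $2h_1-1=3$, whence \eqref{z9} becomes $\prod_{i=1}^{r-2}(1+q^{4i+1})\sum_{i=1}^{r-1}(q^{4i-3}+q^{4i})$. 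In type $F_4$, $(h_1,h_2)=(4,8)$ gives $(1+q^{9})(q+q^{8}+q^{9}+q^{16})$; in type $G_2$, $h_1=3$ gives $(1+q^{3})(q+q^{2}+q^{3})$. Each of these agrees with the polynomial obtained in the corresponding subsection, which proves the theorem.

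The main obstacle lies entirely in the first layer. The delicate point is Proposition~\ref{P1}: tracking the short and long contributions to $\prod_{k=-p}^{p}t_{\alpha^{(k)}}^{\varepsilon_{i,k}}$ separately, and exploiting the reflection symmetry $\alpha^{(-i)}=-s_\theta\alpha^{(i)}$ of the chain $\hat{R}(\tau(\theta\,\check{}\,))$ together with $(\theta_s,\alpha\,\check{}\,)\in\{0,\pm1\}$ to pin down the partial sums $\sum_{k=-p}^{i-1}\varepsilon_{i,k}$ exactly. Downstream of that, the most laborious---though entirely routine---piece is the type $C_r$ accounting: lining up the many special-height intervals of Tables~\ref{(r_k,a)CrZ} and~\ref{(s_ks,a)CrK} and verifying the telescoping for both parities of $r$. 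The final matching against the special exponents is trivial once the lists are in hand.
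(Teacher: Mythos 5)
Your proposal matches the paper's own argument: derive the master formula \eqref{t} from the DAHA machinery (unitarity of $Y^{\theta\,\check{}}$, Proposition~\ref{P1} and its $G_2$ analogue, Lemmas~\ref{r} and~\ref{Amax}), evaluate it case by case via the special heights $(\sigma_k,\alpha)$ (splitting $R_s^+=J\cup K$ in type $C_r$ and treating both parities of $r$), and then rewrite the resulting polynomials in terms of the special exponent lists. The type-by-type matching you give ($B_r$, $C_r$, $F_4$, $G_2$) agrees with the paper's computations, so this is correct and essentially the same proof.
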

\begin{remark}
Let $R\,$ be of type $\,B, C\,$ or $\,F.\,$ Observe the following. 
\begin{enumerate}[1.]
    \item The dimension of the isotypic component of $\,V_{\theta_s}\,$ in $\,\bigwedge V_{\theta_s},\,$ $GM_{\theta_s}(1)=2^{r_s}r_s.\,$
We note that in the case of the multiplicity of the adjoint module $\,\mathfrak{g}\,$ in its exterior algebra, Kostant \text{\cite{1Kos}} proved $\,GM_{[\bigwedge \mathfrak{g}:\mathfrak{g}]}(1)=2^{r}r.\,$

\item The formula for $\,GM_{[\bigwedge V_{\theta_s}:V_{\theta_s}]}(q)\,$ in \text{\eqref{z9}} suggests that the isotypic component of $\,V_{\theta_s}\,$ is a free module over a subring of the invariants in $\,\bigwedge V_{\theta_s}.\,$ Recall that in the exterior algebra of $\,\mathfrak{g}\,$ 
\begin{equation*}
GM_{[\bigwedge \mathfrak{g} : \mathfrak{g}]}(q)=\prod_{i=1}^{r-1}(1+q^{2m_i+1})\sum_{i=1}^{r}(q^{2m_i-1}+q^{2m_i}),
\end{equation*}
where $\,m_i, 1\leq i\leq r,\,$ are the exponents of $\,R\,$  and $\,(\bigwedge \mathfrak{g})^{\mathfrak{g}}=\bigwedge(P_1,\cdots,$ $P_r),\,$ with each primitive invariant generator $\,P_i\,$ of degree $\,2m_i+1\,$ in $\,\bigwedge\mathfrak{g},\,$ \text{\cite{3Baz}}. DeConcini, Papi and Procesi proved that the isotypic component of $\,\mathfrak{g}\,$ is a free module of rank $\,2r\,$ over $\,\bigwedge(P_1,\cdots,P_{r-1}),\,$ with basis vectors $\,f_i\,$ and $\,u_i,\,$ of degree $\,2m_i\,$ and $\,2m_i-1\,$ respectively in $\,\bigwedge \mathfrak{g},\,$ in terms of derivations of $\,\bigwedge \mathfrak{g}\,$ on primitive invariants. (See \text{\cite{4DPP}}). Motivated by this result of De Concini, Papi and Procesi in $\,\bigwedge \mathfrak{g},\,$ we conclude this paper with a conjecture on the isotypic component of $\,V_{\theta_s}\,$ in $\,\bigwedge V_{\theta_s}.\,$ 
\end{enumerate}
\end{remark}

\begin{conjecture}
 Let $\,\bigwedge V_{\theta_s}\,$ be the exterior algebra of the little adjoint module of a simple Lie algebra $\, \mathfrak{g}\,$ of type $B, C\,$ or $ F.\,$ Let the skew invariants $\,(\bigwedge V_{\theta_s})^{\mathfrak{g}}\,$ be the exterior algebra $\,\bigwedge (T_{1},\cdots,T_{r_s}),\,$ $\,\{h_j\}_{j=1}^{r_s}\,$ the set of the special exponents of $\,(R_s^+)\,\check{},\,$ where $\,r_s\,$ is the dimension of the zero weight space of $V_{\theta_s},\,$ let $\,\text{ Der}^{\,n} \bigwedge V_{\theta_s}\,$ be the set of derivations of $\,\bigwedge V_{\theta_s}\,$ such that if $\,\psi \in \text{ Der}^{\,n} \bigwedge V_{\theta_s},$ $\,\psi: \bigwedge^m V_{\theta_s}\rightarrow \bigwedge^{n+m} V_{\theta_s}.\,$ 
The isotypic component of  $\,V_{\theta_s}\,$ is a free module over the subring of invariants $\,\bigwedge (T_{1},\cdots,T_{r_s-1})\,$ with basis vectors $\,a_j \in \bigwedge^{2h_j} V_{\theta_s}\,$ and $\,b_j \in \bigwedge^{2h_j-(2h_1-1)} V_{\theta_s},\,$ $\,1\leq j \leq r_s,\,$ where if $\,v\in V_{\theta_s},\,$ $\,a_j(v)= i_{\theta_s}(v)T_j,\,$ for some derivation $\,i_{\theta_s} \in \text{ Der}^{-1} \bigwedge V_{\theta_s}.\,$
 \end{conjecture}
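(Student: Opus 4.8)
The plan is to follow De Concini--Papi--Procesi's treatment of $\,\mathfrak{g}\,$ in $\,\bigwedge\mathfrak{g}\,$ \cite{4DPP}, turning the structural claim into a graded linear-algebra statement that is pinned down by the series already computed in Theorems~\ref{T1} and \ref{T2}. Write $\,M=\mathrm{Hom}_{\mathfrak{g}}(V_{\theta_s},\bigwedge V_{\theta_s})\,$ for the multiplicity space, so that the isotypic component of $\,V_{\theta_s}\,$ is $\,V_{\theta_s}\otimes M\,$ and $\,M\,$ is a graded space with Poincaré series $\,GM_{\theta_s}(q)\,$. Wedge multiplication makes $\,M\,$ a graded module over the skew invariants $\,A=(\bigwedge V_{\theta_s})^{\mathfrak{g}}=\bigwedge(T_1,\dots,T_{r_s})\,$, where $\,\deg T_i=2h_i+1\,$ by Theorem~\ref{T1} and the Remark after it; we regard $\,M\,$ as a module over the subring $\,B=\bigwedge(T_1,\dots,T_{r_s-1})\,$, a graded-connected local Artinian ring with $\,P_B(q)=\prod_{i=1}^{r_s-1}(1+q^{2h_i+1})\,$. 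By Theorem~\ref{T2} one has $\,GM_{\theta_s}(q)=P_B(q)\cdot\sum_{j=1}^{r_s}\big(q^{2h_j}+q^{2h_j-(2h_1-1)}\big)\,$. It therefore suffices to construct $\,2r_s\,$ homogeneous elements of $\,M\,$ of degrees $\,2h_j\,$ and $\,2h_j-(2h_1-1)\,$, assemble them into a graded $\,B\,$-linear map $\,\Phi\colon F\to M\,$ from the free module $\,F=\bigoplus_{j}B(-2h_j)\oplus\bigoplus_{j}B(-(2h_j-(2h_1-1)))\,$, and prove $\,\Phi\,$ injective: since $\,P_F(q)=P_B(q)\cdot\sum_j(q^{2h_j}+q^{2h_j-(2h_1-1)})=GM_{\theta_s}(q)=P_M(q)\,$ and all graded pieces are finite dimensional, an injective graded map between spaces of equal Poincaré series is an isomorphism, whence $\,M\,$ is $\,B\,$-free on the images of the basis of $\,F\,$.

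\textbf{Construction of the two families.} For the even-degree generators I would take the interior-product map $\,i_{\theta_s}\,$ attached to the invariant symmetric form on the orthogonal module $\,V_{\theta_s}\,$, sending $\,v\in V_{\theta_s}\,$ to the contraction $\,i_{\theta_s}(v)\in\mathrm{Der}^{-1}\bigwedge V_{\theta_s}\,$, and set $\,a_j(v)=i_{\theta_s}(v)\,T_j\in\bigwedge^{2h_j}V_{\theta_s}\,$. This is $\,\mathfrak{g}\,$-equivariant because $\,v\mapsto i_{\theta_s}(v)\,$ is equivariant and $\,T_j\,$ is invariant, so $\,a_j\in M_{2h_j}\,$. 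For the odd-degree generators I would build, in parallel with the Chevalley transgression of \cite{4DPP}, a second $\,\mathfrak{g}\,$-equivariant operation of degree $\,-(2h_1-1)\,$ from the lowest invariant $\,T_1\,$ together with the inclusion $\,\mathfrak{g}\hookrightarrow\mathfrak{so}(V_{\theta_s})=\bigwedge^2 V_{\theta_s}\,$, and apply it to $\,T_j\,$ to obtain $\,b_j\in M_{2h_j-(2h_1-1)}\,$, normalised so that $\,b_1\,$ is the canonical inclusion $\,V_{\theta_s}=\bigwedge^1 V_{\theta_s}\,$ of degree $\,1\,$. The tasks here are to verify well-definedness and equivariance of the second operation, to confirm the stated degree, and, crucially, to show each $\,b_j\,$ is nonzero; the degree drop $\,2h_1-1\,$ is precisely the analogue of the drop $\,2m_1-1=1\,$ in \cite{4DPP}, now governed by the smallest special exponent $\,h_1\,$ of $\,(R_s^+)\,\check{}\,$.

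\textbf{Linear independence — the main obstacle.} The decisive step is to prove that no nontrivial relation $\,\sum_{j,S}c_{j,S}\,T_S\wedge a_j+\sum_{j,S}d_{j,S}\,T_S\wedge b_j=0\,$ holds in $\,M\,$, where $\,T_S\,$ runs over the squarefree monomials in $\,T_1,\dots,T_{r_s-1}\,$. I would filter $\,M\,$ by a refinement of the exterior grading (for instance by the number of $\,T\,$-factors together with the special-height weight read off from $\,i_{\theta_s}\,$) and show that the leading terms of the products $\,T_S\wedge a_j\,$ and $\,T_S\wedge b_j\,$ are pairwise distinct, the unique top term $\,T_1\wedge\cdots\wedge T_{r_s-1}\wedge a_{r_s}\,$ beginning a downward induction; alternatively one may use the $\,\mathfrak{g}\,$-equivariant Poincaré-duality pairing $\,M_k\times M_{d-k}\to\mathbb{C}\,$, $\,d=\dim V_{\theta_s}=r_s+\#R_s\,$, induced by the top exterior power, reducing independence to the nonsingularity of an explicit Gram matrix. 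The essential difference from \cite{4DPP}, and the hardest point, is that $\,\bigwedge V_{\theta_s}\,$ carries no Lie-bracket differential, so the primitivity identities relating contractions to transgressions are unavailable and must be replaced by direct computations with $\,i_{\theta_s}\,$ and the second derivation, organised by the special-height combinatorics of $\,(R_s^+)\,\check{}\,$ established in Section~3; I expect the crux to be proving that the lower family $\,\{b_j\}\,$ survives in $\,M/B_+M\,$, i.e. is genuinely independent of the $\,B\,$-span of $\,\{a_j\}\,$. Granting this, the Poincaré identity of Theorem~\ref{T2} forces $\,\Phi\,$ to be an isomorphism and yields the asserted free basis.
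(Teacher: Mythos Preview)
The statement you are attempting to prove is labeled a \emph{Conjecture} in the paper; the paper offers no proof of it. It is stated as an open problem motivated by the shape of the graded-multiplicity formula in Theorem~\ref{T2} and by the analogy with De~Concini--Papi--Procesi's result for $\bigwedge\mathfrak{g}$. So there is nothing in the paper to compare your argument against: the paper simply records the conjecture and the evidence for it.

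Your proposal is a reasonable research outline, not a proof, and you correctly flag this yourself. The reduction to a Poincar\'e-series identity and the construction of the $a_j$ via interior products are sound and match what the conjecture already specifies. But the two steps you label as obstacles are genuine gaps, not just technicalities: (i) you do not actually construct the second family $b_j$ --- you only gesture at an operation of degree $-(2h_1-1)$ built from $T_1$ and the embedding $\mathfrak{g}\hookrightarrow\bigwedge^2 V_{\theta_s}$, without saying what it is or why it lands in the multiplicity space and is nonzero; and (ii) the linear-independence argument is only sketched at the level of ``filter and hope the leading terms are distinct'' or ``compute a Gram matrix'', with no mechanism offered for why either should succeed. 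As you note, the Lie-algebra differential that powers the argument in \cite{4DPP} is absent here, and you have not proposed a concrete substitute. Until both of these are carried out, what you have is a strategy for attacking the conjecture, consistent with the paper's own motivation, rather than a proof.
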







\end{document}